\DeclareMathOperator*{\argmax}{argmax} 
\begin{document}
\title{Recursive Modified Pattern Search on High-dimensional Simplex : A Blackbox Optimization Technique}


\author{Priyam Das}


\institute{Priyam Das \at
              University of Texas MD Anderson Cancer Center, USA \\
               Tel.: +1 919-308-5892\\
              \email{pdas@ncsu.edu}}

\date{Received: date / Accepted: date}

\maketitle

\begin{abstract}
In this paper, a novel derivative-free pattern search based algorithm for Black-box optimization is proposed over a simplex constrained parameter space. At each iteration, starting from the current solution, new possible set of solutions are found by adding a set of derived step-size vectors to the initial starting point. While deriving these step-size vectors, precautions and adjustments are considered so that the set of new possible solution points still remain within the simplex constrained space. Thus, no extra time is spent in evaluating the (possibly expensive) objective function at infeasible points (points outside the unit-simplex space); which being the primary motivation of designing a customized optimization algorithm specifically when the parameters belong to a unit-simplex. While minimizing any objective function of $m$ parameters, within each iteration, the objective function is evaluated at $2m$ new possible solution points. So, upto $2m$ parallel threads can be incorporated which makes the computation even faster while optimizing expensive objective functions over high-dimensional parameter space. Once a local minimum is discovered, in order to find a better solution, a novel `re-start' strategy is considered to increase the likelihood of finding a better solution. Unlike existing pattern search based methods, a sparsity control parameter is introduced which can be used to induce sparsity in the solution in case the solution is expected to be sparse in prior. A comparative study of the performances of the proposed algorithm and other existing algorithms are shown for a few low, moderate and high-dimensional optimization problems. Upto 338 folds improvement in computation time is achieved using the proposed algorithm over Genetic algorithm along with better solution. The proposed algorithm is used to estimate the simultaneous quantiles of North Atlantic Hurricane velocities during 1981--2006 by maximizing a non-closed form likelihood function with (possibly) multiple maximums.
\keywords{Simplex \and constrained optimization \and pattern search \and convex optimization \and Blackbox optimization}
\end{abstract}
\newpage
\section{Introduction}
\label{sec1}
Black-box can be described as a device, system or an object which can be observed only in terms of inputs and outputs. However, the ongoing process within it, is considered unknown. Black-box objective function can be considered similar to any
\begin{figure*}[ht]
\includegraphics[width=0.9\textwidth]{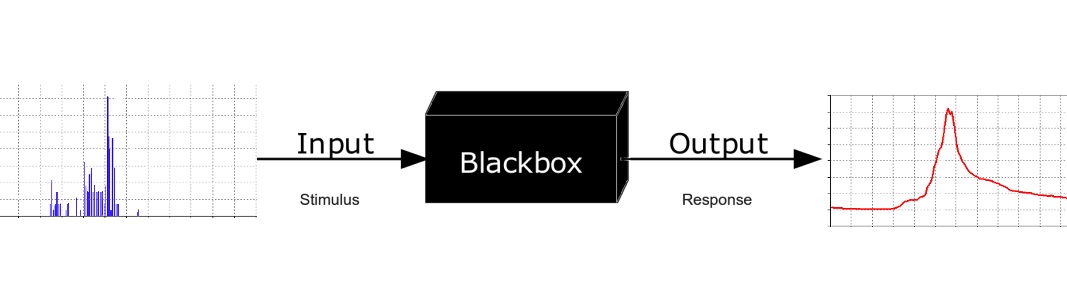}
\caption{Blackbox mechanics.}
\label{figure_00} 
 \end{figure*}
 other Blackbox device where for any given input of the values of the parameters, only the value of the objective function is observed without any further knowledge about the structure, continuity or differentiability of that objective function. 
Now, consider the minimization problem 
\begin{align}
minimize \;:& \; f(\mathbf{p}), \text{ where }\mathbf{p} = (p_1,\cdots,p_m) \nonumber \\
subject \; to \; :& \; p_i \geq 0, \; 1 \leq i \leq m, \; \sum_{i=1}^mp_i = 1, \mathbf{p} \in \mathrm{R}^m,
\label{opt_problem}
\end{align}
where $f(\mathbf{p})$ might have points of discontinuity, multiple local minimums and might not be differentiable over the domain. In the field of computational mathematics, statistics and operational research, optimization problems on the simplex parameter space are quite common. Some of the useful and convenient methods like modeling with B-splines (e.g., monotonic function estimation technique proposed in \cite{Das2017a},\cite{Das2017b},\cite{Das2018}), estimation of parameters in multinomial problems, estimation of Markov chain transition matrix, estimation of mixture proportions of mixture distribution (e.g., \cite{Basford1985}) are a few examples where the parameter space is given by a unit-simplex or a collection of unit-simplexes. 

In literature, a variety of methods can be found for optimizing linear and non-linear objective functions on constrained linear space, therefore they can be used for optimizing objective functions on unit-simplex constrained parameter space as well. In practice, convex optimization algorithms (e.g., `Interior-point (IP)' algorithm, see \cite{Potra2000}, \cite{Karmakar1984}, \cite{Boyd2006}, \cite{Wright2005}; `Sequential Quadratic Programming (SQP)', algorithm see \cite{Wright2005}, \cite{Nocedal2006}, \cite{Boggs1996}) are widely used to minimize non-linear objective functions on constrained and unconstrained parameter spaces.  However, in case the objective function is non-convex with multiple minimas, the convex optimization algorithms might get stuck at a local minimum and return it as the solution without committing any further attempt to find a better minimum than the obtained one. To improve the likelihood of obtaining better solution using convex optimization methods, one possible strategy is to start any convex optimization from several starting points and to choose the best solution out of them. For low dimensional non-convex optimization problems, this strategy of starting from multiple initial points might be computationally affordable. However, as the dimension of the parameter space increases, this strategy proves to be computationally very expensive. 

In order to globally minimize any objective function with (possibly) multiple minimums, in the last few decades, many deterministic and non-deterministic (i.e., stochastic global search algorithms) global optimization strategies have been proposed which can also be extended or applied to minimize functions of linearly constrained parameter spaces (\cite{Rios2013}). Among non-deterministic global minimization algorithms, `Genetic algorithm (GA)' (see \cite{Fraser1957}, \cite{Bethke1980}, \cite{Goldberg1989}) and `Simulated annealing (SA)' (see \cite{Kirkpatrick1983}, \cite{Granville1994}) are widely used in different fields. However there remain a few drawbacks of these methods. e.g., GA does not scale well with complexity as in higher dimensional optimization problems there is often an exponential increase in search space size (see \cite{Geris2012}, page 21). Besides, another major problem with these two methods is that they might prove to be much expensive even if the objective function is convex. Now, it can be argued that it is not conventional to use global optimization technique to minimize convex functions in case we already have that prior knowledge about its convexity. However, in case the function is actually convex but the true structure of the function (i.e., a convex Blackbox function) is not known, ideally Blackbox and global optimization techniques should be applied to minimize it. In that scenario, the Blackbox techniques (e.g., GA) might prove to be computationally expensive even though the objective function is actually convex. Among other non-deterministic global optimization techniques,  Particle Swarm Optimization (PSO) (\cite{Kennedy1995}, \cite{Eberhart1995}) remains popular for unconstrained global optimization.

With the increasing access to high-performance modern computers and clusters (\cite{Hilbert2011}), some of the existing parallelizable optimization algorithms (e.g., Monte Carlo methods) have a great advantage for certain types of problems. The motivation behind using parallelization in these methods is mainly to either start from different starting points or to use different random number generator seeds simultaneously (\cite{Kerr2014}). As mentioned earlier, though these methods perform well for lower dimensional parameter spaces, since with an increasing number of dimensions, parameter space grows exponentially, the way these methods use parallelization, might not be very helpful. 
If an algorithm is designed in such a way that the requirement of parallelization increases linearly with the dimension of the parameter space, it would be more convenient and useful for handling constraint optimization problem over high-dimensional parameter spaces.

\begin{figure}[]
	\centering
	\includegraphics[width=0.8\textwidth]{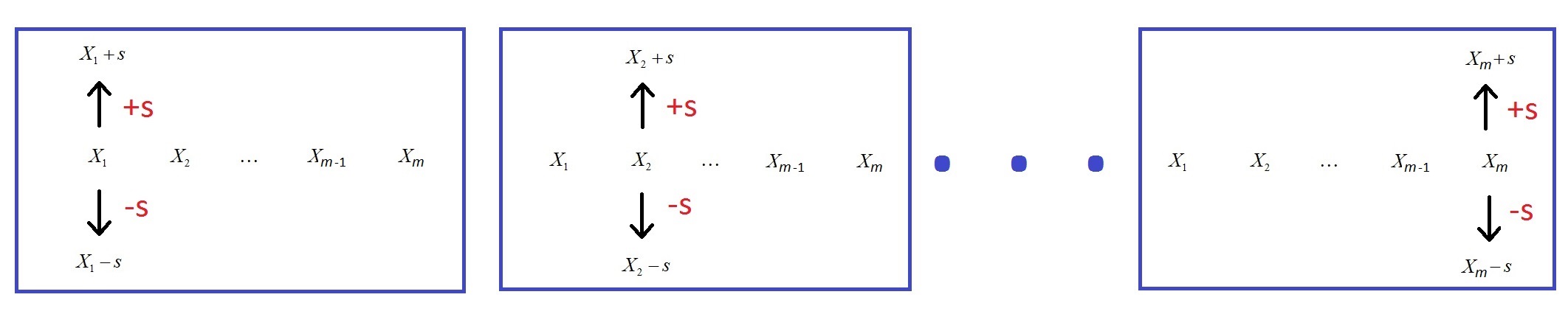}
	\caption{Fermi's principle : Possible $2m$ movements starting from any initial point inside an iteration with fixed step-size $s$ while minimizing any $m$-dimensional unconstrained objective function.}
	\label{fermi}
\end{figure}

In order to minimize any unconstrained Black-box function, \cite{Fermi1952} proposed an effective coordinate search based algorithm (known as Fermi's principle, see Figure \ref{fermi}), where at each iteration, a step-size $s>0$ (a scalar) is fixed based on some criteria. Then, in order to look for candidate solutions (i.e., set of points obtained around the current solution using any algorithm, where the objective function values are to be computed while looking for a better solution than the current one), this step-size $s$ is added and subtracted from each coordinate of the current solution one at a time while keeping the other coordinates unchanged. Thus, $2m$ new candidate solutions are generated at each iteration. Out of these $2m+1$ points (including the starting point and $2m$ new candidate solutions), the point with minimum objective function value is considered as the current updated solution from where the next iteration begins with a step-size (maybe same or different from step-size considered in previous iteration). Extending this idea in general, \cite{Hooke1961} introduced Direct search algorithms for unconstrained optimization problems. Direct search is a technique to solve optimization problem without using any information regarding the gradient of the objective function. Rather, at each iteration it evaluates the objective function a set of points around the current solution looking for a better solution. Generalizing the idea of \cite{Hooke1961}, \cite{Torczon1997} proposed `Generalized Pattern Search' (GPS) on unconstrained parameter space. In GPS, using an \textit{exploratory moves algorithm} (\cite{Torczon1997}), a set of step-size vectors are generated such that adding each of the set of step-size vectors to the current solution yields one candidate solution. Thus, based on the values of the set of step-size vectors, new set of candidate solutions are obtained by making coordinate-wise movements around the current solution point in order to find a better solution. Further generalizing and modifying GPS, \cite{Kolda2003} and \cite{Audet2006} proposed `Generating Set Search'(GSS), `Mesh Adaptive Direct Search'(MADS) methods respectively. A few related articles on Direct search and GPS methods were proposed in \cite{Custodio2015}, \cite{Martinez2013}, \cite{Lewis1999}. Some other derivative-free optimization methods can be found in  \cite{Audet2014}, \cite{Conn2009}, \cite{Jones1998}, \cite{Digabel2011}, \cite{Martelli2014}, \cite{Audet2008} and \cite{Audet22008}.  However, most of these methods were developed for unconstrained optimization problems. Though some of them were designed for linearly constrained spaces (e.g., \cite{Lewis2000} ) or hyper-rectangular space (e.g., \cite{Das2016}), however, to the best of our knowledge, no article has proposed derivative-free Black-box optimization technique specifically designed for simplex-constraint space.

\subsection{Heuristic idea of Recursive Modified Pattern Search on Simplex (RMPSS)}
In order to design an algorithm to minimize a Black-box function over unit simplex constrained parameter space, we adopt the basic idea of Fermi's principle (\cite{Fermi1952}). However, unlike their case, since we are minimizing over a simplex constrained space, updating only one coordinate (by adding or subtracting step-size $s>0$) at a time would result in generating candidate solutions outside the unit simplex. Therefore, once step-size $s$ is added (or subtracted) to any coordinate, to keep the sum of the coordinates constant (i.e., 1), $\frac{s}{m-1}$ is deducted (or added) from each of the rest $(m-1)$ coordinates, where $m$ denotes the dimension of the parameter vector lying on the simplex. Thus the sum of the coordinates of the new candidate solution would be 1. However, during thees update steps, a scenario might arise where atleast one updated coordinate is either $<0$ or $>1$. In order to handle those cases, some case-specific modifications are also considered (see Section \ref{sec_algo}) during the search for candidate solutions. Thus, at each iteration, $2m$ new candidate solutions are obtained within simplex constrained space making coordinate-wise movements and, similar to Fermi's principle, at the end of each iteration, out of the $2m+1$ solutions, the solution with the least objective function value is retained as the updated solution from where the next iteration begins. Note that finding $2m$ candidate solutions and computing the value of the objective function at those coordinates can be performed simultaneously using $2m$ parallel threads since these sequence of operations do not interact with each other once the step-size of the movement is fixed at the beginning of the iteration. Therefore, upto $2m$ parallel threads could be used for computation making it way faster to optimize expensive objective functions. Thus, while minimizing expensive objective function over high-dimensional simplex space ($m = 100,200, 500$ etc), the proposed algorithm allows to use the full potential of GPU computing which, nowadays, allows to run upto hundreds of parallel threads simultaneously.

In order to increase the likelihood of capturing the true global minimum, in the proposed algorithm we consider a specific deterministic way of updating the step-size $s$ throughout the iterations. The proposed algorithm can be divided into several sequences of iterations called \textit{run}s. Within each \textit{run}, the first iteration is initialized from a starting point and a large  step-size (fixed by the user). Note that larger step-size yields candidate solutions which are far away from the starting point. Thus keeping larger step-size at the beginning of a \textit{run} allows to select candidate solutions far away from the starting point. While making movements with a particular step-size, step-size is kept unchanged as long as better solution is obtained in each iteration. Once the objective function value stops improving, in order to incorporate a coarser search in the neighbourhood of the current solution, step-size $s$ is reduced to $\frac{s}{\rho}$ (where $\rho>1$ is a constant provided by the user) before further iterations are performed. Iterations are performed until the step-size becomes smaller than a step-size threshold supplied by the user. Under certain set of regularity conditions, it is shown that the solution obtained by this sequence of operations would yield a local minimum (or global minimum, depending on the regularity conditions)  if step-size threshold is taken to be arbitrarily small (\cite{Lewis2000}, also see Section \ref{sec_theory}). While minimizing a non-convex function with this strategy, it is possible that the solution returned at the end of a \textit{run} is a local minimum despite existence of a better solution. In the field of heuristic global optimization algorithms, once a local minimum is identified, it is a common strategy to consider generating candidate solutions far away from the current solution in order to jump out of the (possibly locally convex) local neighbourhood of the current solution (e.g., GA, SA). To incorporate this strategy in the proposed algorithm, another \textit{run} is initiated with large step-size starting from the solution returned by the previous \textit{run}. Larger value of step-size ensures that the new candidate solutions are far away from the current solution which increases the likelihood of finding a better solution in case the function is locally convex near the solution point despite existence of better solution in different neighbourhood. \textit{Run}s are performed until two consecutive \textit{run}s yield the same solution (see Figure \ref{flowchart}).

\begin{figure}[]
	\centering
	\includegraphics[width=0.8\textwidth]{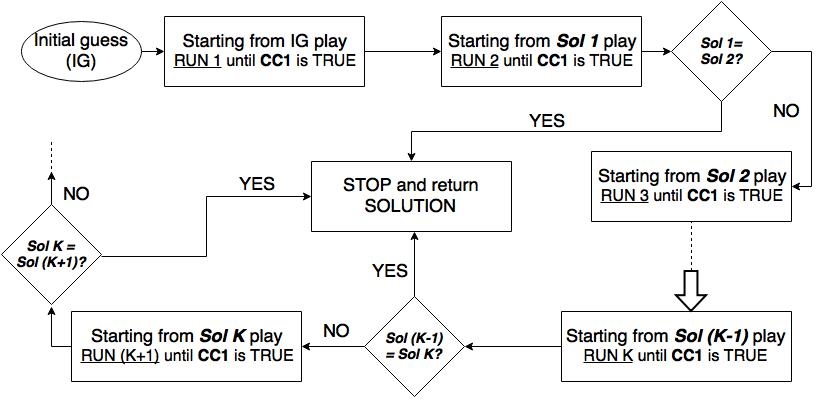}
	\caption{Flowchart of the RMPSS algorithm. CC1 : Convergence criteria 1 given by step (8) of STAGE 1 in Section \ref{sec_algo}.}
	\label{flowchart}
\end{figure}

Another novel feature of the proposed algorithm is incorporation of sparsity parameter in order to encourage sparse solution. The sparsity parameter is supplied by the user and depending on the prior guess about the sparsity of the solution, the value of the sparsity parameter can be adjusted accordingly which is useful for several statistical and computational problems (e.g., estimating proportions of mixture model for small sample and high number of existing population classes). Based on the way candidate solutions are chosen within each iteration, the proposed algorithm can be easily recognized as a variation of Pattern search based methods (e.g., GPS in \cite{Torczon1997}) which is specifically designed to minimize objective functions on a simplex constrained space. However, unlike existing Pattern search methods, in the proposed algorithm \textit{run}s are performed repeatedly until the final solution is obtained. Also, some case-specific modification of step-sizes are considered during update step to keep the new set of candidate solutions on simplex. So, the proposed method is named `Recursive Modified Pattern Search on Simplex' (RMPSS).

The rest of the article is arranged as follows. Section 2 describes the RMPSS algorithm and the roles of the considered tuning parameters. In Section 3, we show the convergence property of RMPSS under a set of regularity conditions on the objective function. In Section 4 it is shown how a few other well-known constrained optimization problems can be solved with RMPSS algorithm. In Section 5, we perform extensive comparative study of the performances of RMPSS and a few other well-known algorithms for a wide range of objective functions. In Section 6, RMPSS is applied to a real data, to estimate simultaneous quantiles of North Atlantic Hurricanes by maximizing a (possibly) multimodal likelihood whose parameters belong to simplexes. A case-study of RMPSS using parallel computing is also considered in this section. In Section 7, a discussion on RMPSS is provided.

\section{Algorithm}
\label{sec_algo}
Consider the optimization problem defined by Equation \eqref{opt_problem}. 
Define $$\mathbf{S} = \{\mathbf{p} = (p_1,\ldots,p_m) \in \mathbb{R}^m \; | \; \sum_{i=1}^mp_i = 1, \, p_i \geq 0, \, 0 \leq i \leq m \}$$ Now the problem can be written as 
\begin{align}
minimize \;:& \; f(\mathbf{p}) \nonumber \\
subject \; to \; : & \; \mathbf{p} \in \mathbf{S}.
\label{opt_problem_3}
\end{align}


\subsection{Parameters in RMPSS algorithm }
As mentioned in Section \ref{sec1}, the proposed algorithm consists of several \textit{runs}. Each \textit{run} is an iterative procedure and requires a starting point. In order to find a better solution than the starting point, a sequence of iterations are performed inside each \textit{run}. A \textit{run} stops based on some convergence criteria which is discussed elaborately in the following. At the end of each \textit{run}, a solution (which may or may not be the final solution depending on other criteria) is returned. The initial guess of the solution should be entered by the user and it is considered  as the starting point for the first \textit{run}. Second \textit{run} onwards, each \textit{run} automatically takes the solution of the previous \textit{run} as the starting point of that \textit{run}. Operations performed inside each \textit{run} attempt to minimize the objective function. Hence, the solution tends to improve after each \textit{run}. Once two consecutive \textit{runs} yield the same solution, the algorithm terminates and returns the solution obtained at the last \textit{run} as the final solution to the user.

Each \textit{run} is a similar iterative procedure except the fact that the values of tuning parameters after each \textit{run} might be changed. In each \textit{run} there are four tuning parameters which are \textit{initial global step size} $s_{initial}$, \textit{step decay rate} $\rho$, \textit{step size threshold} $\phi$ and \textit{sparsity threshold} $\lambda$. Except \textit{step decay rate} $\rho$, the values of the other tuning parameters are set before starting the algorithm and kept unchanged till the algorithm converges. The value of \textit{step decay rate} is taken to be $\rho=\rho_1$ for the first \textit{run} and $\rho=\rho_2$ for the following \textit{runs}. Overall there are 5 tuning parameters which are $s_{initial}, \rho_1, \rho_2, \phi$ and $\lambda$. Apart from these parameters, we consider two more quantities namely $max\_iter$, which denotes the maximum number of allowed iterations inside a \textit{run}, and $max\_runs$ which denotes the maximum number of \textit{runs} to be executed.

Inside each \textit{run}, there are a few parameters whose values are updated throughout the iterations which are \textit{global step size} (denoted by $s^{(j)}$ for $j$-th iteration within a \textit{run}) and \textit{local step sizes} (within any iteration of a \textit{run}, there are $2m$ of them, denoted by $\{s_i^+\}_{i=1}^m$ and $\{s_i^+\}_{i=1}^m$). 
In the first iteration of each \textit{run}, we set initial value of \textit{global step size} $s^{(1)} = s_{initial}$. Let $s^{(j)}$ denote the value of the \textit{global step size} at $j$-th iteration. . The value of \textit{global step size} is kept unchanged throughout an iteration. But at the end of each iteration, based on some convergence criteria (see below, see step (7) of STAGE 1) its value is either kept unchanged or divided by a factor \textit{step decay rate} $\rho$. Hence, in the $(j+1)$-th iteration, the \textit{global step size} is $s^{(j+1)}$ would be equal to either $s^{(j)}$ or $\frac{s^{(j)}}{\rho}$. At the beginning of any iteration, the \textit{local step sizes} are set equal to the current \textit{global step size}. For example, at the beginning of $j$-th iteration, we set $s_i^+ = s_i^- = s^{(j)}$ for $i=1,\ldots,m$. Suppose the current value of $\mathbf{p}$ in the $j$-th iteration is $\mathbf{p}^{(j)} = (p_1^{(j)}, \ldots, p_m^{(j)}) \in \mathbf{S}$. During the $j$-th iteration, the objective function value is evaluated at  $2m$ points within the domain $\mathbf{S}$ which are obtained by making coordinate-wise movements depending on the \textit{local step sizes} starting from the current solution $\mathbf{p}^{(j)}$. The value of the \textit{local step sizes} are subject to be updated if the movements corresponding to those step sizes yield points outside $\mathbf{S}$ (see step (3) and (4) of STAGE 1). Note that the \textit{global step size} is dependent on the iteration number whereas the \textit{local step sizes} are not dependent on the iteration number as their values are not related to their values in the previous iteration. 

While solving problems on high-dimensional simplex constrained parameter space, in order to encourage sparsity, we incorporate another sparsity parameter named \textit{sparsity threshold}, denoted by $\lambda$. In the process of updating the solution at each iteration, once the value of a co-ordinate (of the solution) goes below the \textit{sparsity threshold} $\lambda$, we consider those co-ordinates to be `insignificant'. 
Suppose $l$-th component of $\mathbf{p}^{(j)}$ is `insignificant'. Then, while making coordinate-wise movements to find new candidate solutions around $\mathbf{p}^{(j)}$, $l$-th component of $\mathbf{p}^{(j)}$ i.e., $p_l^{(j)}$ is kept unchanged
(see step (3) and (4) of STAGE 1). 

After starting from current solution $\mathbf{p}^{(j)}$ at the $j$-th iteration of a \textit{run}, suppose the objective function values at the candidate solution points are given by $\{f_i^+\}_{i=1}^m$ and $\{f_i^-\}_{i=1}^m$ (see step (3) and (4) of STAGE 1). We find the smallest one out of these $2m$ values. If the smallest of these $2m$ values is smaller than $f(\mathbf{p}^{(j)})$, the point corresponding to that smallest value of the objective function is accepted and updated as the current solution. Then the `insignificant' positions (if any) are replaced by 0 and the sum of the `insignificant' positions (named `$garbage$') is divided by the number of `significant' positions and that quantity is added to those remaining positions so that sparsity is encouraged and the simplex constraint is maintained. Now this new point is considered as the new solution and the value of $\mathbf{p}^{(j+1)}$ is set equal to it (see step (5) and (6) of STAGE 1). In order to decide whether the solutions obtained by two consecutive iterations are close enough, the square of the euclidean distance of the objective function parameters of two consecutive iterations are computed. If that comes to be less than $tol\_fun$, the \textit{global step size} is divided by a factor $\rho$, the \textit{step decay rate} (see step (7) of STAGE 1). A \textit{run} ends when the \textit{global step size} becomes less than or equal to \textit{step size threshold} $\phi$ (see step (8) of STAGE 1). Once same solution is returned by two consecutive \textit{runs}, our algorithm terminates and returns the solution obtained in the last \textit{run} as the final solution.

\begin{table}[]
  \resizebox{1\columnwidth}{!}{
\begin{tabular}{llll}
\hline
Parameter & Description & Role & \begin{tabular}[c]{@{}l@{}}Recommended values\\ and comments\end{tabular} \\ \hline
$s_{initial}$ & \begin{tabular}[c]{@{}l@{}}initial global step\\ size\end{tabular} & \begin{tabular}[c]{@{}l@{}}Initial step-size at the beginning of\\ the run, higher value promotes\\ selection of distant candidate solutions.\end{tabular} & \begin{tabular}[c]{@{}l@{}}1 (setting it 1 allows maximum possible\\ jump within simplex space)\end{tabular} \\ \hline
$\rho$ & step decay rate & \begin{tabular}[c]{@{}l@{}}Controls the rate of decay of global step-\\ size, smaller value of $\rho$ results in\\ slower decay of the global step size, \\ thus it allows denser search in the \\ neighborhood of the current solution at\\ the expense of higher computation time.\end{tabular} & \begin{tabular}[c]{@{}l@{}}has to be $>1$ \\ $\rho_1 = 2$ (for first run)\\ $\rho_2 = 1.05$ (second run onwards)\end{tabular} \\ \hline
$\phi$ & \begin{tabular}[c]{@{}l@{}}lower bound of \\ global step size\end{tabular} & \begin{tabular}[c]{@{}l@{}}Controls precision of search, smaller \\ value of $\phi$ results in more accurate \\ solution in the expense of higher \\ computation time.\end{tabular} & \begin{tabular}[c]{@{}l@{}}$10^{-3}$ (can be taken upto as small\\ as $10^{-7}$ for more accurate solution)\end{tabular} \\ \hline
$\lambda$ & sparsity threshold & \begin{tabular}[c]{@{}l@{}}(i) Controls sparsity, encourage sparse \\ solution.\\ (ii) Helps in the search procedure when\\ coordinate(s) of the starting point of any\\ iteration is(are) close to 0.\end{tabular} & \begin{tabular}[c]{@{}l@{}}$10^{-3}$ (may consider $10^{-2}$ for inducing \\ more sparsity, or can be set as \\ small as $10^{-7}$)\end{tabular} \\ \hline
tol\_fun & \begin{tabular}[c]{@{}l@{}}termination tolerance \\ on the function value\end{tabular} & \begin{tabular}[c]{@{}l@{}}The minimum amount of improvement \\ in objective function value required so\\  that the global step size is not decayed.\end{tabular} & $10^{-15}$ \\ \hline
max\_runs & max no. of runs & Put an upper limit on number of runs. & \begin{tabular}[c]{@{}l@{}}1000 (however the algorithm \\ converged before 1000 runs in all\\  the cases considered in this article)\end{tabular} \\ \hline
max\_iter & max no. of iterations & \begin{tabular}[c]{@{}l@{}}Put an upper limit on number of \\ iterations allowed within each run.\end{tabular} & \begin{tabular}[c]{@{}l@{}}50000 (however required number of\\  iterations within a run never crossed\\ 50000 in any of the considered cases)\end{tabular} \\ \hline
\end{tabular}}
\caption{Tuning parameters and their roles in the RMPSS algorithm.}
\label{tuning}
\end{table}

The default value of $s_{initial}$ is taken to be equal to $1$ which is the maximum possible jump size of one coordinate of a parameter belonging to simplex. $\rho_1$ and $\rho_2$ denote the \textit{step decay rates} for the first \textit{run} and the following \textit{runs} respectively. Taking smaller value of \textit{step decay rate} results in slower decay of the \textit{global step size}, thus it allows denser search in the neighbourhood of the current solution at the expense of higher computation time. Based on experiments over a wide range of challenging low, moderate and high dimensional objective functions on simplex constrained spaces, we note that setting the default values of these parameters $\rho_1 = 2$ and $\rho_2 = 1.05$ yields reasonable solution outperforming a few competing well-known algorithms (as shown in simulation study). $\phi$ denotes the lower bound of the \textit{global step size} for movement within the simplex constrained domain. Making the value of $\phi$ smaller results in more accurate solution at the cost of higher computation time (shown in simulation study). It's default value is taken to be equal to $10^{-3}$. Sparsity adjusting parameter $\lambda$ controls the sparsity. At the end of each iteration, the positions of the current estimated parameter with values less than $\lambda$ are set equal to $0$ (see step (6) of STAGE 1) along with an adjustment step to other coordinates so that the candidate solution still remains in simplex space. In case, the solution is expected to be sparse, it's value should be set larger and in case, the solution is not expected to be sparse, it's value should be set relatively smaller. We note that, in general, the default value of $\lambda=10^{-3}$ works fine over a range of low, medium and high-dimensional optimization problems. In order to stop infinite looping (in case if any), we set maximum number of allowed iteration within a \textit{run} $max\_iter = 50000$ and maximum number of allowed \textit{runs}s $max\_runs = 1000$. A brief summary of the parameters of RMPSS algorithm and their roles is noted down in Table \ref{tuning}.

\subsection{RMPSS algorithm}
The whole algorithm is described diving into two parts namely STAGE 1 and STAGE 2. The steps mentioned in STAGE 1 are performed within each \textit{run}; while the operations in STAGE 2 are performed in between two consecutive \textit{run}s to decide whether further execution of the follwoing \textit{run} is necessary. Before going through the STAGE 1 for the first time, we set $R=0,\rho = \rho_1$ and initial guess of the solution $\mathbf{p}^{(1)} = \mathbf{z}^{(R)}= (p_1^{(1)},\ldots, p_m^{(1)}) \in \mathbf{S}$. 
\\
\textbf{STAGE : 1}
\begin{enumerate}
\item Set $j=1$. Set $s^{(j)} = s_{initial}$ Go to step (2).
\item If $j > max\_iter$, set $\hat{\boldsymbol{p}} = \boldsymbol{p}^{(j-1)}$. Go to step (9). Else, set $s_i^+ = s_i^-=s^{(j)}$ and $f_i^+ = f_i^-=Y^{(j)}=f(\boldsymbol{p}^{(j)})$ for all $i=1,\cdots, m$. Set $i=1$ and go to step (3).
\item If $i > m$, set $i=1$ and go to step (4). Else, find $K_i^+=n(S_i^+)$ where $S_i^+ = \{l\;|\; p_l^{(j)} > \lambda,l\neq i\}$. If $K_i^+ \geq 1$, go to step (3a), else set $i=i+1$ and go to step (3).
\begin{enumerate}
\item If $s_i^+ \leq \phi$, set $i =i+1$ and go to step (3). Else (if $s_i^+ > \phi$), evaluate vector $\boldsymbol{q}_i^+ = (q_{i1}^+, \cdots, q_{im}^+)$ such that 
\begin{align*}
q_{il}^+ &= p_i^{(l)}+s_i^+ \quad \text{for} \quad l=i \\
&=p_i^{(l)}-\frac{s_i^+}{K_i^+} \quad \text{if} \quad l \in S_i^+ \\
&=p_i^{(l)} \quad \text{if} \quad l \in (S_i^+\cup\{i\})^{C}
\end{align*}
Go to step (3b).
\item Check whether $\boldsymbol{q}_i^+ \in \mathbf{S}$ or not. If $\boldsymbol{q}_i^+ \in \mathbf{S}$, go to step (3c). Else, set $s_i^+ = \frac{s_i^+}{\rho}$ and go to step (3a)
\item Evaluate $f_i^+ = f(\boldsymbol{q}_i^+)$. Set $i = i+1$ and go to step (3).
\end{enumerate}
\item If $i > m$, go to step (5). Else, find $K_i^-=n(S_i^-)$ where $S_i^- = \{l\;|\; p_l^{(j)} > \lambda,l\neq i\}$. If $K_i^- \geq 1$, go to step (4a), else set $i=i+1$ go to step (4).
\begin{enumerate}
\item If $s_i^- \leq \phi$, set $i =i+1$ and go to step (4). Else (if $s_i^- > \phi$), evaluate vector $\boldsymbol{q}_i^- = (q_{i1}^-, \cdots, q_{im}^-)$ such that 
\begin{align*}
q_{il}^- &= p_i^{(l)}-s_i^- \quad \text{for} \quad l=i \\
&=p_i^{(l)}+\frac{s_i^-}{K_i^-} \quad \text{if} \quad l \in S_i^- \\
&=p_i^{(l)} \quad \text{if} \quad l \in (S_i^-\cup\{i\})^{C}
\end{align*}
Go to step (4b)
\item Check whether $\boldsymbol{q}_i^- \in \mathbf{S}$ or not. If $\boldsymbol{q}_i^- \in \mathbf{S}$, go to step (4c). Else, set $s_i^- = \frac{s_i^-}{\rho}$ and go to step (4a)
\item Evaluate $f_i^- = f(\boldsymbol{q}_i^-)$. Set $i = i+1$ and go to step (4).
\end{enumerate}
\item Set $k_1 = \underset{1 \leq l \leq m}{\arg\min} \;f_l^+$ and $k_2 = \underset{1 \leq l \leq m}{\arg\min} \;f_l^-$. If  $ \min (f_{k_1}^+, f_{k_2}^-) < Y^{(j)}$, go to step (5a). Else, set $\boldsymbol{p}^{(j+1)} = \boldsymbol{p}^{(j)}$ and $Y^{(j+1)} = Y^{(j)}$, set $j=j+1$. Go to step (7).
\begin{enumerate}
\item If $f_{k_1}^+ < f_{k_2}^-$, set $\boldsymbol{p}_{temp} = \boldsymbol{q_{k_1}^+}$, else (if $f_{k_1}^+ \geq f_{k_2}^-$), set $\boldsymbol{p}_{temp} = \boldsymbol{q_{k_2}^-}$. Go to step (6).
\end{enumerate}
\item Find $K_{updated}=n(S_{updated})$ where $S_{updated} = \{l\;|\; \boldsymbol{p}_{temp}(l) > \lambda,l=1,\cdots,m\}$. Go to step (6a).
\begin{enumerate}
\item If $K_{updated}=m$, set $\boldsymbol{p}^{(j+1)} = \boldsymbol{p}_{temp}$, set $j=j+1$. Go to step (7). Else, go to step (6b)
\item Set $garbage = \sum_{j\in S_{updated}^C} \boldsymbol{p}_{temp}(k)$. 
\begin{align*}
\boldsymbol{p}^{(j+1)}(l) &= \boldsymbol{p}_{temp}(l) + garbage/K_{updated} &\quad \text{if} \; l\in S_{updated} \\
&=0 &\quad \text{if} \; l\in S_{updated}^C
\end{align*}
Set $j=j+1$. Go to step (7).
\end{enumerate}
\item If $\sum_{i=1}^m(\boldsymbol{p}^{(j)}(i) - \boldsymbol{p}^{(j-1)}(i))^2 < tol\_fun$, set $s^{(j)} = s^{(j-1)}/\rho$. Go to step (8). Else, set $s^{(j)} = s^{(j-1)}$. Go to step (2).
\item If $s^{(j)} \leq \phi$, set $\hat{\mathbf{p}} = \mathbf{p}^{(j)}$. Go to step (9). Else, go to step (2).
\item \textbf{STOP} execution. Set $R = R+1$. Set $\mathbf{z}^{(R)} = \hat{\mathbf{p}}$.  Go to STAGE 2.
\end{enumerate}

\textbf{STAGE : 2}
\begin{enumerate}
\item If $R \leq max\_runs$ and $\mathbf{z}^{(R)} \neq \mathbf{z}^{(R-1)}$, go to step (2). Else $\mathbf{z}^{(R)}$ is the final solution. \textbf{STOP} and \textbf{EXIT}.
\item Set $\rho = \rho_2$ keeping other tuning parameters ($\phi, \lambda$ and $s_{initial}$) fixed. Repeat algorithm described in STAGE 1 setting $\mathbf{p}^{(1)} = \mathbf{z}^{(R)}$.
\end{enumerate}

RMPSS algorithm is a modified version of GPS where \textit{run}s are performed recursively starting with large step-size in order to create enough opportunity to jump out of the local neighbourhood in case the objective function value at the current solution is higher than atleast another local minimum. But just like any other Black-box optimization techniques (e.g., GA, SA, GPS etc) RMPSS may not reach the global minimum on every occasion while minimizing any Black-box function. The performance of Black-box optimization algorithms can be compared in terms of amount of average time spent and proportion of successful convergences to the global minimum while minimizing various range of challenging optimization problems (including popular benchmark functions e.g., Ackley's function, Griewank's function etc) for various dimensions. The comparative studies on wide range of functions considered in Section \ref{sec_app} provides more in-depth knowledge about the performance of RMPSS compared to several other popular optimization techniques.


\section{Theoretical Property}
\label{sec_theory}
In this section it is shown that if the objective function is continuous, differentiable and convex, then starting from any given starting point within the domain, executing single \textit{run} of the RMPSS would yield the solution where the global minimum of the objective function is achieved.
Consider the following theorem.
\begin{theorem}
\label{theorem}
 Suppose $\mathbf{S} = \{(x_1,\cdots,x_n) \in \mathrm{R}^n \; : \; \sum_{i=1}^n x_i = 1, x_i \geq 0, i=1,\cdots,n\}$ and $f$ is convex, continuous and differentiable on $\mathbf{S}$. Consider a sequence $\delta_k = \frac{s}{\rho^k}$ for $k\in \mathrm{N}$ and $s>0, \rho>1$. Suppose $\mathbf{u}$ is a point in $\mathbf{S}$ such that all its coordinates are positive. Define $\mathbf{u}_k^{(i+)} = (u_1-\frac{\delta_k}{n-1},\ldots, u_{i-1}-\frac{\delta_k}{n-1},u_i+\delta_k,u_{i+1}-\frac{\delta_k}{n-1}, \ldots, u_n-\frac{\delta_k}{n-1})$ and $\mathbf{u}_k^{(i-)} = (u_1+\frac{\delta_k}{n-1},\ldots, u_{i-1}+\frac{\delta_k}{n-1},u_i-\delta_k,u_{i+1}+\frac{\delta_k}{n-1}, \ldots, u_n+\frac{\delta_k}{n-1})$ for $i=1,\cdots,n$. If for all $k \in \mathrm{N}$, $f(\mathbf{u}) \leq f(\mathbf{u}_k^{(i+)})$ and $f(\mathbf{u}) \leq f(\mathbf{u}_k^{(i-)})$ (whenever $\mathbf{u}_k^{(i+)}, \mathbf{u}_k^{(i-)}\in \mathbf{S}$) for all $i = 1,\cdots,n$, the global minimum of $f$ occurs at $\mathbf{u}$. 
 \end{theorem}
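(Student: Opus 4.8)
The plan is to distill the two families of hypotheses into a single statement about the gradient of $f$ restricted to the tangent space of the simplex, and then close the argument with the first-order characterization of convexity. First I would rewrite the perturbed points compactly: setting $\mathbf{d}_i = \tfrac{1}{n-1}(n\mathbf{e}_i - \mathbf{1})$, where $\mathbf{e}_i$ is the $i$-th standard basis vector and $\mathbf{1}$ the all-ones vector, one checks immediately that $\mathbf{u}_k^{(i+)} = \mathbf{u} + \delta_k \mathbf{d}_i$ and $\mathbf{u}_k^{(i-)} = \mathbf{u} - \delta_k \mathbf{d}_i$, and that $\sum_l (\mathbf{d}_i)_l = 0$, so each $\mathbf{d}_i$ lies in the tangent hyperplane $H_0 = \{\mathbf{v} \in \mathbb{R}^n : \sum_l v_l = 0\}$. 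Since $\mathbf{u}$ has all coordinates strictly positive and $\delta_k = s/\rho^k \to 0$, for every $k$ with $\delta_k \le \min_l u_l$ both $\mathbf{u}_k^{(i+)}$ and $\mathbf{u}_k^{(i-)}$ belong to $\mathbf{S}$, so the hypothesis supplies $f(\mathbf{u}) \le f(\mathbf{u} \pm \delta_k \mathbf{d}_i)$ for all such $k$ and all $i$.

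Next I would pass to the limit $k \to \infty$. For each fixed $i$, dividing $f(\mathbf{u} + \delta_k \mathbf{d}_i) - f(\mathbf{u}) \ge 0$ by $\delta_k > 0$ and using differentiability of $f$ at $\mathbf{u}$ yields the directional derivative bound $\nabla f(\mathbf{u})^{\top}\mathbf{d}_i \ge 0$; running the same argument with $-\mathbf{d}_i$ gives $\nabla f(\mathbf{u})^{\top}\mathbf{d}_i \le 0$. Hence $\nabla f(\mathbf{u})^{\top}\mathbf{d}_i = 0$ for every $i = 1,\dots,n$.

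Then I would observe that the $\mathbf{d}_i$ span $H_0$: since $\mathbf{d}_i - \mathbf{d}_j = \tfrac{n}{n-1}(\mathbf{e}_i - \mathbf{e}_j)$ and the differences $\mathbf{e}_i - \mathbf{e}_j$ span the $(n-1)$-dimensional space $H_0$, the span of $\{\mathbf{d}_1,\dots,\mathbf{d}_n\}$ contains $H_0$, and equality holds because each $\mathbf{d}_i \in H_0$. Consequently $\nabla f(\mathbf{u})^{\top}\mathbf{v} = 0$ for all $\mathbf{v} \in H_0$. Finally, for an arbitrary $\mathbf{x} \in \mathbf{S}$ the displacement $\mathbf{x} - \mathbf{u}$ satisfies $\sum_l (x_l - u_l) = 0$, i.e. $\mathbf{x} - \mathbf{u} \in H_0$, so the first-order convexity inequality $f(\mathbf{x}) \ge f(\mathbf{u}) + \nabla f(\mathbf{u})^{\top}(\mathbf{x} - \mathbf{u})$ collapses to $f(\mathbf{x}) \ge f(\mathbf{u})$; thus $\mathbf{u}$ is a global minimizer.

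The argument is short, and the only delicate point is the limit step: one must use that $\delta_k \to 0$ so that the difference quotients along this specific geometric sequence converge to the genuine (one-sided) directional derivative, which differentiability at $\mathbf{u}$ guarantees, and one must use that $\mathbf{u}$ being interior forces the perturbed points to be feasible for all large $k$, so that the hypothesis genuinely applies along a tail of the sequence rather than only finitely often. A minor clarification worth one sentence is the meaning of ``differentiable on $\mathbf{S}$'' for a set with empty interior in $\mathbb{R}^n$: it is enough to read $\nabla f(\mathbf{u})$ as any vector representing the differential of $f$ along the affine hull of $\mathbf{S}$, since only its action on $H_0$ enters the computation.
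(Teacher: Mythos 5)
Your proposal is correct, and it rests on the same underlying idea as the paper's proof --- show that the hypothesis forces the derivative of $f$ along the simplex to vanish at $\mathbf{u}$, then let convexity promote the resulting stationarity to global optimality --- but the execution is genuinely different and noticeably leaner. The paper first eliminates the $n$-th coordinate, passing to a function $f^*$ on an $(n-1)$-dimensional set $\mathbf{S}^*$, restricts $f^*$ to line segments $g_i = f^*\circ h_i$, and establishes $g_i'(0)=0$ by a mean-value-theorem argument combined with a convexity contradiction (if $0$ were not critical, a sufficiently small $\delta_{N_1}$ would violate $g_i(0)\le g_i(\delta_{N_1})$); it then stacks the $n-1$ resulting equations into a linear system $A\mathbf{x}=\mathbf{0}$ and invokes full rank of $A$. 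You instead stay in $\mathbb{R}^n$, read the hypotheses as $f(\mathbf{u}\pm\delta_k\mathbf{d}_i)\ge f(\mathbf{u})$ for the tangent directions $\mathbf{d}_i=\tfrac{1}{n-1}(n\mathbf{e}_i-\mathbf{1})$, and obtain $\nabla f(\mathbf{u})^{\top}\mathbf{d}_i=0$ directly as a two-sided limit of difference quotients --- which uses only differentiability, deferring convexity entirely to the last line. Your spanning observation $\mathbf{d}_i-\mathbf{d}_j=\tfrac{n}{n-1}(\mathbf{e}_i-\mathbf{e}_j)$ replaces the paper's rank argument (and sidesteps a bookkeeping wrinkle there, where an $n\times n$ matrix is applied to an $(n-1)$-vector), and the first-order convexity inequality restricted to $H_0$ replaces the reduction to an unconstrained critical-point statement for $f^*$. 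What the paper's route buys is an explicit reduced parametrization that mirrors how the algorithm actually moves on the simplex; what yours buys is brevity, a cleaner separation of where differentiability versus convexity is used, and no detour through the mean value theorem. You also rightly flag the one genuinely delicate point --- interpreting ``differentiable on $\mathbf{S}$'' for a measure-zero set as differentiability along the affine hull --- which the paper handles implicitly by its coordinate reduction.
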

 
\begin{proof}[Proof of Theorem \ref{theorem}]
Fix some $i \in \{1,\ldots, n\}$. Define
\begin{align*}
r_1 =& \min \{(n-1)u_1, \ldots, (n-1)u_{i-1}, (1-u_1), (n-1)u_{i+1},\ldots, (n-1)u_n\}, \\
r_2 =& \min \{(n-1)(1-u_1), \ldots, (n-1)(1-u_{i-1}), u_1, (n-1)(1-u_{i+1}),\ldots, \\
 & (n-1)(1-u_n)\}.
\end{align*}
Set $r = \min\{r_1,r_2\}$. Since $\delta_k$ is strictly decreasing sequence going to zero, there exist a $N \in \mathbb{Z}$ such that for all $k \geq N$, $\delta_k < r$. Fix some $i \in \{1,\ldots,n\}$. Hence $\mathbf{u}_N^{(i+)}, \mathbf{u}_N^{(i-)} \in \mathbf{S}$.  

Once we fix the first $(n-1)$ coordinates of any element in $\mathbf{S}$, the $n$-th coordinate can be derived by subtracting the sum of the first $(n-1)$ coordinates from $1$. Define $$\mathbf{S}^* = \{(x_1,\cdots,x_{n-1}) \in \mathrm{R}^{n-1} \; : \; \sum_{i=1}^n x_i < 1, x_i \geq 0, i=1,\cdots,n-1\}. $$ Define $\mathbf{u}^*  = (u_1,\ldots,u_{n-1})$ and 
\begin{align*}
\mathbf{u}_k^{*(i+)} & = (u_1-\frac{\delta_k}{n-1},\ldots, u_{i-1}-\frac{\delta_k}{n-1},u_i+\delta_k,u_{i+1}-\frac{\delta_k}{n-1}, \ldots,u_{n-1}-\frac{\delta_k}{n-1}), \\
\mathbf{u}_k^{*(i-)} & = (u_1+\frac{\delta_k}{n-1},\ldots, u_{i-1}+\frac{\delta_k}{n-1},u_i-\delta_k,u_{i+1}+\frac{\delta_k}{n-1}, \ldots,u_{n-1}+\frac{\delta_k}{n-1}),
\end{align*}
for $i = 1,\ldots,n-1$. Note that $\mathbf{u}^*, \mathbf{u}_k^{*(i+)}, \mathbf{u}_k^{*(i-)}$ are the first $(n-1)$ coordinates of $\mathbf{u}, \mathbf{u}_k^{(i+)}, \mathbf{u}_k^{(i-)}$ respectively. Define $f^* : \mathbf{S}^* \mapsto \mathbb{R}$ such that $$f^*(x_1,\ldots,x_{n-1}) = f(x_1,\ldots,x_{n-1}, 1 -\sum_{i=1}^{n-1}x_i).$$ Hence we have $f^*(\mathbf{u}^*) = f(\mathbf{u})$, $f^*(\mathbf{u^*}_k^{(i+)}) = f(\mathbf{u}_k^{(i+)})$ and $f^*(\mathbf{u^*}_k^{(i-)}) = f(\mathbf{u}_k^{(i-)})$. Since, $f$ is continuous and differentiable on $\mathbf{S}$, $f^*$ is continuous and differentiable on $\mathbf{S}^*$. Convexity of $f$ implies $f^*$ is convex on $\mathbf{S^*}$. Consider $\mathbf{x}^*_1, \mathbf{x}^*_2 \in \mathbf{S}^*$. Suppose $\mathbf{x}_1, \mathbf{x}_2 \in \mathbf{S}$ are such that their first $(m-1)$ co-ordinates are same as $\mathbf{x}^*_1$ and $\mathbf{x}^*_2$ respectively. Take any $\gamma \in (0,1)$. Now
\begin{align*}
\gamma f^*(\mathbf{x}^*_1) + (1-\gamma)f^*(\mathbf{x}^*_2) & = \gamma f(\mathbf{x_1}) + (1-\gamma)f(\mathbf{x_2}) \\
& \geq f(\gamma \mathbf{x}_1 + (1-\gamma)\mathbf{x}_2) \\
& = f^*(\gamma \mathbf{x}^*_1 + (1-\gamma)\mathbf{x}^*_2).
\end{align*}
Hence $f^*$ is also convex. Define $h_i : U_i \mapsto \mathbf{S}^*$ such that $$h_i(z) = (u_1 - \frac{z}{n-1},\ldots,u_{i-1} - \frac{z}{n-1}, u_i + z,u_{i+1}- \frac{z}{n-1}, \ldots u_{n-1}- \frac{z}{n-1})$$ for  $i=1,\ldots, n-1$, where $U_i = [-\delta_N,\delta_N]$ (since each co-ordinate of $\mathbf{u}$ is positive, $\mathbf{u}^* \in \mathbf{S}^*$. Note that the way $N$ is chosen ensures $h_i(U_i) \subset \mathbf{S}^*$). Define $g_i :  U_i \mapsto \mathrm{R}$ for $i=1,\ldots,n-1$ such that $g_i = f^* \circ h_i$. Hence we have $$g_i(z) = f^*(u_1 - \frac{z}{n-1},\ldots,u_{i-1} - \frac{z}{n-1}, u_i + z,u_{i+1}- \frac{z}{n-1}, \ldots u_{n-1}- \frac{z}{n-1})$$ for $i=1,\ldots, n-1$.

It is noted that $h_i$ is continuous on $U_i=[-\delta_N, \delta_N]$ and differentiable on $(-\delta_N, \delta_N)$ for $i=1,\ldots,n-1$ and $f^*$ is continuous and differentiable on $\mathbf{S}^*$. Composition of two continuous functions is continuous and the composition of two differentiable functions is differentiable. Hence, $g_i$ is continuous on $U_i=[-\delta_N, \delta_N]$ and differentiable on $(-\delta_N, \delta_N)$. 

Take any $i \in \{1,\ldots,n-1\}$. Note that $g_i(\delta_N) = f^*(\mathbf{u}_N^{*(i+)}),\, g_i(-\delta_N) = f^*(\mathbf{u}_N^{*(i-)})$ and $g_i(0) = f^*(\mathbf{u^*})$.  So, $g_i(0) \leq g_i(-\delta_N)$ and $g_i(0) \leq g_i(\delta_N)$. Without loss of generality, assume $f^*(\mathbf{u}_N^{*(i-)}) \leq f^*(\mathbf{u}_N^{*(i+)})$ which implies $g_i(0) \leq g_i(-\delta_N) \leq g_i(\delta_N)$. 

Since we have $g_i(0) \leq g_i(-\delta_N) \leq g_i(\delta_N)$, from continuity of $g_i$ we can say that there exists $w\in [0,\delta_N]$ such that $g_i(w) = g_i(-\delta_N) \geq g_i(0)$. Now since $g_i$ is continuous on $[-\delta_N, \delta_N]$ and differentiable on $(-\delta_N, \delta_N)$, it implies $g_i$ is continuous on $[-\delta_N, w]$ and differentiable on $(-\delta_N, w)$. Using Mean value theorem we can say that there exists a point $v \in [-\delta_N, w]$ such that $g_i^{\prime}(v) = 0$.

We claim that $g_i^{\prime}(v) = 0$ holds for $v = 0$. Suppose $g_i^{\prime}(0) \neq 0$. Assume $g_i^{\prime}(v^*) = 0$  for some $v^* \neq 0$ and $v^* \in (-\delta_N,w)$. Without loss of generality, we assume $v^* > 0$. Since $h_i$ and $f$ are convex, $g_i$ is also convex. Now $g_i^{\prime}(v^*) = 0$ implies $v^*$ is a local minima. On the other hand, since $g_i^{\prime}(0) \neq 0$, implies $0$ is not a critical point or local minima. Hence, $g_i(0) > g_i(v^*)$. Take $N_1 \in \mathbb{Z}$ such that $0 <\delta_{N_1} < v^*$. Hence there exists a $\lambda \in (0,1)$ such that $\delta_{N_1} = \lambda. 0 + (1-\lambda). v^*$. Now,
\begin{align*}
g_i(\delta_{N_1}) &= g_i(\lambda. 0 +(1-\lambda).v^*) \\
& \leq \lambda g_i(0) + (1-\lambda)g_i(v^*) \\
& = g_i(0) + (1-\lambda)(g_i(v^*) - g_i(0)) \\
& = g_i(0) - (1-\lambda)(g_i(0) - g_i(v^*)) \\
& < g_i(0).
\end{align*}
But, we know for all $k \in \mathbb{Z}$, $g_i(0) \leq g_i(\delta_k)$ which implies $g_i(0) \leq g_i(\delta_{N_1})$. It is a contradiction. \\
Hence we have $g_i^{\prime}(0) = 0$. Now 
\begin{align*}
g_i^{\prime}(0) &= \bigg[\frac{\partial}{\partial \epsilon}g_i(\epsilon)\bigg]_{\epsilon = 0} \\ 
&= \bigg[\frac{\partial}{\partial \epsilon}f^*(h_i(\epsilon))\bigg]_{\epsilon = 0} \\ 
& = \bigg[\frac{\partial}{\partial h_i(\epsilon)}f^*(h_i(\epsilon))\bigg]_{\epsilon = 0} \bigg[\frac{\partial}{\partial \epsilon}h_i(\epsilon)\bigg]_{\epsilon = 0}.
\end{align*}
Now $h_i(0) = \mathbf{u}^*$. Hence 
\begin{align*}
\bigg[\frac{\partial}{\partial h_i(\epsilon)}f^*(h_i(\epsilon))\bigg]_{\epsilon = 0} &= \nabla f^*(\mathbf{u}^*) \\ 
&=\bigg[\frac{\partial}{\partial x_1}f^*(\mathbf{u}^*), \ldots, \frac{\partial}{\partial x_{n-1}}f^*(\mathbf{u}^*) \bigg] \\
&=\bigg[\nabla_1, \ldots, \nabla_{n-1} \bigg]
\end{align*}
where $\nabla_i = \frac{\partial}{\partial x_{i}}f^*(\mathbf{u}^*)$ for $i=1,\ldots,n-1$. and 
\begin{align*}
\frac{\partial}{\partial \epsilon}h_i(\epsilon) & = [a_{i1},\ldots, a_{i(n-1)}]^T
\end{align*}
where $a_{ii} = 1$ and $a_{ij} = -\frac{1}{n-1}$ for $j \in \{1,\ldots,n-1\} \setminus \{i\}$
Hence 
\begin{align*}
\bigg[\frac{\partial}{\partial \epsilon}g_i(\epsilon)\bigg]_{\epsilon = 0} &= \bigg[\nabla_1, \ldots, \nabla_{n-1} \bigg] \bigg[a_{i1},\ldots, a_{i(n-1)}\bigg]^T \\
& = \bigg[a_{i1},\ldots, a_{i(n-1)}\bigg] \begin{bmatrix}
    \nabla_1  \\
    \vdots \\
    \nabla_{n-1} 
  \end{bmatrix} \\
  & = 0.
\end{align*}
Since this equation holds for all $i= 1,\cdots,n-1$, we have $A\mathbf{x} = \mathbf{0}$ where
\begin{align*}
A_{n \times n} = \begin{bmatrix}
    1 & -\frac{1}{n-1} & \cdots & -\frac{1}{n-1}  \\
    -\frac{1}{n-1} & 1 &\cdots & -\frac{1}{n-1}\\
    \vdots & \vdots & \ddots & \vdots \\
    -\frac{1}{n-1} & -\frac{1}{n-1} & \cdots & 1
  \end{bmatrix}, \; \mathbf{x}_{n \times 1} = \begin{bmatrix}
    \nabla_1  \\
    \vdots \\
    \nabla_{n-1} 
  \end{bmatrix}.
\end{align*}
\\
Since $A$ is full rank for $n \in \mathbb{N} \setminus \{1\}$, $A\mathbf{x} = \mathbf{0}$ implies $\mathbf{x} = \mathbf{0}$. Hence $\frac{\partial}{\partial x_{i}}f^*(\mathbf{u}^*)=0$ for all $i = 1,\ldots, n-1$. Hence $\mathbf{u}^*$ is a critical point. Since $f^*$ is convex, a local minima occurs at $\mathbf{u}^*$. But for a convex function, global minimum occurs at any local minimum. Hence global minimum of $f^*$ occurs at $\mathbf{u}^*$, which clearly implies global minimum of $f$ occurs at $\mathbf{u}$.
\end{proof}

Suppose the solution given by RMPSS is a point $\mathbf{u} \in \mathbf{S}$ such that all of coordinate elements are greater than zero. Now, RMPSS returns the final solution to the user when two consecutive \textit{run}s return the same solution. It implies in the last \textit{run}, for all movements of step sizes $\delta_k=\frac{s_{initial}}{\rho^k}$ (until $\delta_k$ gets smaller than the \textit{step size threshold}) the objective function value is checked at $\mathbf{u}_k^{(i+)},\mathbf{u}_k^{(i-)}$ and $f(\mathbf{u}) \leq f(\mathbf{u}_k^{(i+)})$ and $f(\mathbf{u}) \leq f(\mathbf{u}_k^{(i-)})$ hold for all $i=1,\ldots,n$. So making the value of \textit{step size threshold} sufficiently small, RMPSS would reach the global minimum under assumed regularity conditions of the objective function. Note that, for this convergence results to be hold true, the value of $\lambda$ should be taken to be zero. But, in practical, it is noted that setting a small non-zero value of $\lambda$ (specially in high-dimensional problems with possibility of sparsity) increases the efficiency and accuracy of the solution provided by this algorithm.
\section{Generalization to some other cases}
\label{general}
In this section we describe how RMPSS can be used to optimize any objective function on two well-known constrained spaces, namely simplex inequality constraint and linearly constraint (with positive coefficients).
\subsection{Simplex Inequality}
\label{sec_ineq}
Consider the case where the optimization problem is given by 
\begin{align}
minimize \;:& \; f(p_1,\cdots,p_m) \nonumber \\
subject \; to \; :& \; p_i \geq 0, \; 1 \leq i \leq m, \; \sum_{i=1}^mp_i \leq 1. 
\label{ineq_problem}
\end{align}
Under this scenario, a slack variable $p_{m+1}$ is introduced such that $p_{m+1} \geq 0$ and $\sum_{i=1}^{m+1}p_i = 1$. Define $f_1(p_1,\ldots,p_{m+1}) = f(p_1,\ldots,p_m)$. So, the modified optimization problem which is equivalent to Equation (\ref{ineq_problem}) is given by
\begin{align}
minimize \;:& \; f_1(p_1,\cdots,p_{m+1}) \nonumber \\
subject \; to \; :& \; p_i \geq 0, \; 1 \leq i \leq m+1, \; \sum_{i=1}^{m+1}p_i = 1,
\label{ineq_problem_sol}
\end{align}
which can be easily solved using the proposed algorithm.
\subsection{Linear Constraint with Positive Coefficients}
\label{sec_lin}
Now, consider the case where the optimization problem is given by 
\begin{align}
minimize \;:& \; f(x_1,\cdots,x_m) \nonumber \\
subject \; to \; :& \; x_i \geq 0,  \; 1 \leq i \leq m, \; \sum_{i=1}^ma_ix_i = K, a_i>0,K>0.
\label{lin_problem}
\end{align}
 To solve this problem, consider the change of variable given by $y_i = \frac{a_ix_i}{K}$ for $i=1,\ldots,m$. $y_i$ is non-negative since $K > 0, x_i \geq 0$ and $a_i > 0$ for $i=1,\ldots,m$. Now, the constraint $\sum_{i=1}^ma_ix_i = K$ is equivalent to $\sum_{i=1}^my_i = 1$ after re-parametrization. Consider the mapping $g : \mathbb{R}^m \mapsto \mathbb{R}^m$   $$g(y_1,\ldots,y_m) = \bigg(\frac{Ky_1}{a_1},\ldots,\frac{Ky_m}{a_m}\bigg).$$ Define $h : \mathbb{R}^m \mapsto \mathbb{R}$ such that $h = f\circ g$. So, 
\begin{align*}
h(y_1,\ldots,y_m) &= f(g(y_1,\ldots,y_m)) \\
&= f\bigg(\frac{Ky_1}{a_1},\ldots,\frac{Ky_m}{a_m}\bigg) \\
&= f(x_1,\ldots,x_m).
\end{align*}
Hence, the optimization problem in Equation (\ref{lin_problem}) is equivalent to
\begin{align}
minimize \;:& \; h(y_1,\cdots,y_m) \nonumber \\
subject \; to \; :& \; y_i \geq 0,  \; 1 \leq i \leq m, \; \sum_{i=1}^my_i = 1,
\label{lin_problem_sol}
\end{align}
which can be solved using the proposed algorithm.

\section{Application to non-convex global optimization on Simplex }
\label{sec_app}
In this section, we compare the performance of the proposed method (RMPSS) with  three constrained optimization methods : the `interior-point' (IP) algorithm, `sequential quadratic programming' (SQP) and `genetic algorithm' (GA) based on optimization of challenging objective functions on simplex constrained parameter space.  All of the above-mentioned algorithms are available in Matlab R2016b (The Mathworks) via the Optimization Toolbox functions \textit{fmincon} (for IP and SQP algorithm)  and \textit{ga} (for GA). IP and SQP algorithms return a local minimum as the solution but they are less time consuming, in general. On the other hand GA is a heuristic algorithm which attempts to find global minimum, and it is more time consuming. For the following comparative studies, we consider the algorithm is successful in reaching the global minimum if the absolute difference of function value at the solution (returned by the algorithm) and the true minimum value of the objective function is less than $10^{-2}$. For maximization problems, the objective function is taken to be the negative of the true function which needs to be maximized and same convergence criteria is considered as mentioned previously. For RMPSS algorithm, the values of all the tuning parameters are taken to be same as mentioned in Section \ref{sec_algo}. RMPSS algorithm is implemented in Matlab R2016b. The code for RMPSS algorithm is made available at the following link\footnote{\url{https://github.com/priyamdas2/RMPSS}}. For IP and SQP algorithms, the upper bound for maximum number of  iterations and function evaluations is set to be infinity. For GA, we use the default options of \textit{ga} function in Matlab R2016b.  We perform the simulations in a machine with 64-Bit Windows 8.1, Intel i7 3.60GHz processors and 32GB RAM.
\subsection{Maximum of two Gaussian densities}
\label{sect:gauss}
Consider the problem
 \begin{align}
 maximize \;:& \; max\big(8*\phi( \mathbf{p};\mu_1, \Sigma_1), \; 5*\phi(\mathbf{p};\mu_2, \Sigma_2) \big)  \nonumber \\
subject \; to \; :& \; p_1,p_2 \geq 0, \quad p_1+p_2 = 1,
\label{eq:example_normal}
\end{align}
where $\mathbf{p} = (p_1,p_2)$ is our parameter of interest, $\phi(\mathbf{x};\mu,\Sigma)$ denotes the normal density at $\mathbf{x}$ with mean $\mu$ and covariance matrix $\Sigma$. Here, $\mu_1 = \begin{bmatrix}
    0.25 \\
    0.75
\end{bmatrix}, \mu_2 = \begin{bmatrix}
    0.8 \\
    0.2
\end{bmatrix}$ and $\Sigma_1 = \Sigma_2 = \begin{bmatrix}
    0.1 & 0\\
    0 & 0.1
\end{bmatrix}$. In Figure \ref{fig:02}, the function is plotted on the restricted parameter space which is of our interest. Note that the function has two local maximums at $(p_1,p_2) = (0.8,0.2)$ and $(0.25,0.75)$, the latter being the global maximum. For comparative study, taking $(p_1,p_2) = (0.8,0.2)$ (which is a local maximum, not the global maximum) as the starting point, RMPSS, SQP, IP and GA methods are used to find the global maximum. It is observed that only RMPSS and GA reach the global maxima $(0.25,0.75)$ successfully, while IP and SQP get stuck at the starting point as that is a local maximum. We also consider another study where this objective function is maximized using all above-mentioned algorithms starting from 100 randomly generated starting point within the simplex constrained space. Only RMPSS and GA reach the global maximum at every occasion. Note that more than 21 folds improvement in average computation time is obtained using RMPSS over GA (see Table \ref{table:example_123}). 
 \begin{figure}
     \subfigure[]{\includegraphics[width=0.5\textwidth]{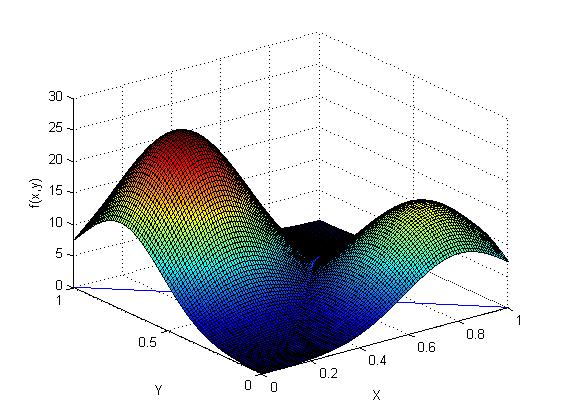}\label{fig:01} }
     \subfigure[]{\includegraphics[width=0.5\textwidth]{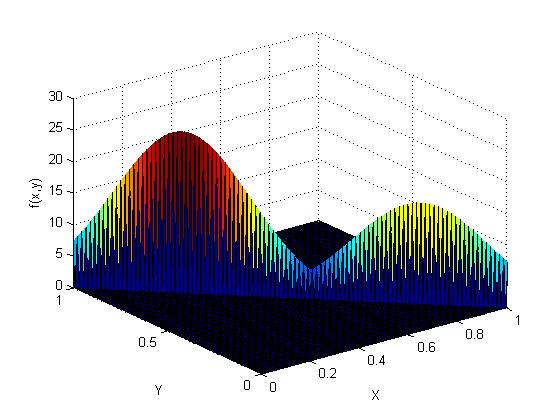}\label{fig:02} }
       \caption{In Example (\ref{sect:gauss}) (a) Value of $f(\mathbf{p})$ on unrestricted $X-Y$ plane. (b) $f(\mathbf{p})$ on restricted 1-simplex defined by  $x+y=1; \; x,y \geq 0$.}
       \label{figure_0} 
 \end{figure}

\subsection{Modified Easom function on simplex}
\label{sect:easom}
Consider the following problem
 \begin{align}
 maximize \;:& \; \cos(6\pi p_1)\cos(6\pi p_2)\cos(6\pi p_3)\exp(-\sum_{i=1}^3(3\pi p_i-\pi)^2)  \nonumber \\
subject \; to \; :& \; p_1,p_2,p_3 \geq 0, \;p_1+p_2+p_3 = 1.
\label{eq:example_easom}
\end{align}
This function has multiple local maxima (see Figure \ref{fig:example_easom}) with the global maxima at $\mathbf{p}=(\frac{1}{3},\frac{1}{3},\frac{1}{3})$, the functional value at this point being 1.  Starting from 100 randomly generated starting points within the simplex constrained space, RMPSS, SQP, IP and GA are used to find the global maximum. The comparative performance of all above-mentioned algorithms are noted in Table \ref{table:example_123}. In this scenario also only RMPSS and GA converge successfully to true global minimum for all starting points. It is noted that around 85 fold time improvement is obtained using RMPSS over GA.
 \begin{figure*}[] 
    \centering
    \includegraphics[width=0.7\linewidth]{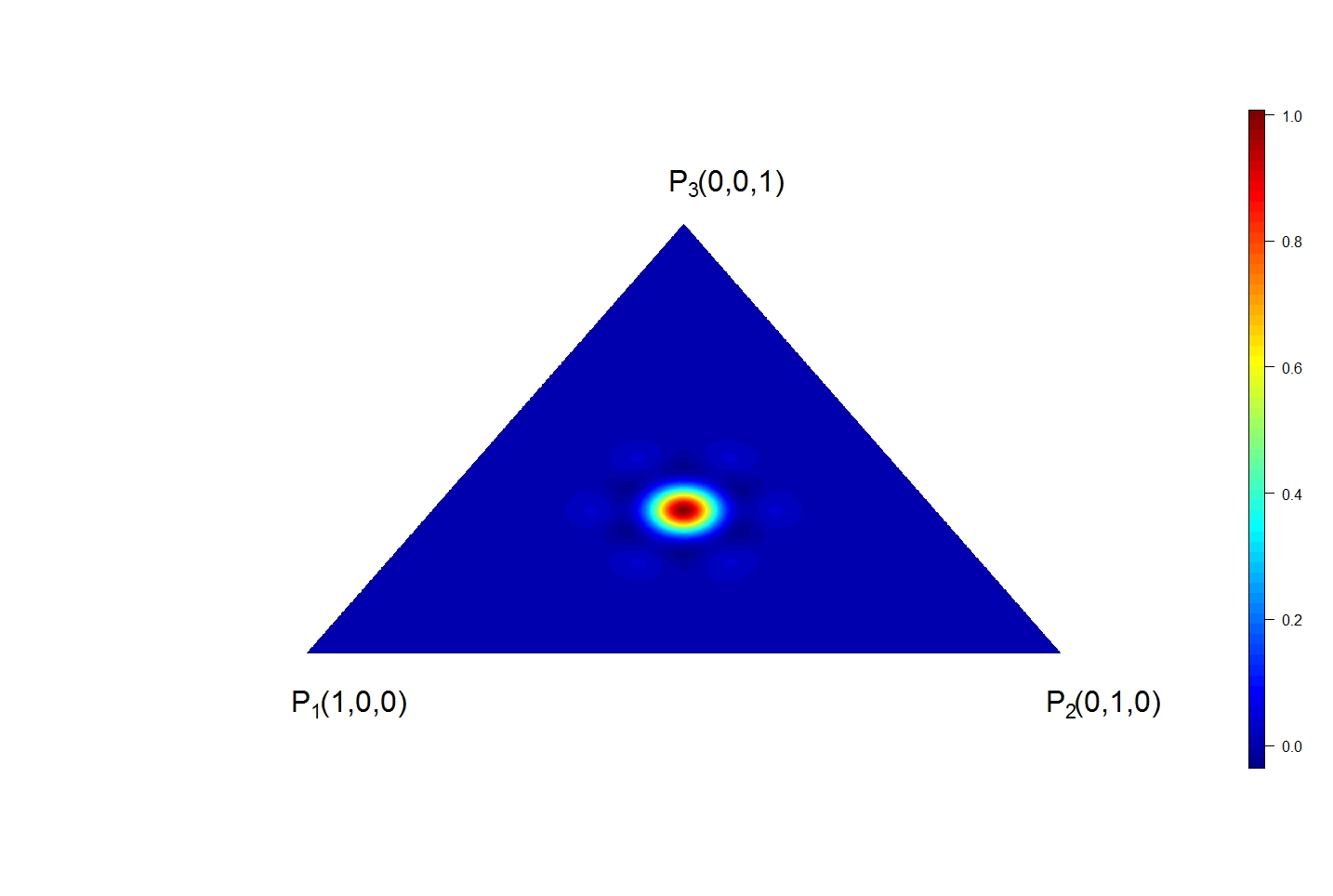} 
    \caption{Heat-map of functional values of modified Easom function on 3-simplex in Example (\ref{sect:easom}).}
      \label{fig:example_easom} 
\end{figure*} 

\subsection{Non-linear non-concave maximization on 2-simplex}
 \label{sect:sin}
 Here we consider a problem of maximizing a non-linear non-concave function (which is equivalent to minimizing non-linear non-convex function) on the a linearly constrained space on $\mathrm{R}^2$. 
 \begin{align}
maximize \;:& \; sin(\frac{7\pi x}{4}) + sin(\frac{7\pi y}{4}) - 2(x-y)^2  \nonumber \\
subject \; to \; :& \; 3x+2y \leq 6,\quad x,y \geq 0 
\label{eq:example_sin_cos}
\end{align}
In Figure \ref{fig:example_sin_cos}, a heat-map of the values of this function over the parameter space is plotted. This function has 4 local maximums out of which the global maximum is located at $(x,y)=(\frac{2}{7},\frac{2}{7})=(0.2857,0.2857)$, the value of the objective function value being 2 at this point. Note that any point in the feasible region (defined in \eqref{eq:example_sin_cos}) is in the convex hull generated by $(0,0), (2,0)$ and $(0,3)$ on $\mathbb{R}^2$. Now, for any given point within a triangle, there exist unique non-negative weights $\{p_1,p_2,p_3 :p_i\geq 0, \sum_{i}p_i = 1\}$ such that the coordinate of that point can be given by the weighted average of the coordinates of the vertices. Thus, once the weight vector $(p_1,p_2,p_3)$ is estimated for which the the objective function is maximized, the solution in terms of $(x,y)$ can be calculated. In Table \ref{table:example_123}, it is noted that RMPSS outperforms the other algorithms based on the comparative study of the number of successful convergences for all methods starting from 100 randomly generated starting points, and using RMPSS around 66 folds improvement in computation time is observed over GA.
 \begin{figure*}
     \subfigure[]{\includegraphics[width=0.5\textwidth]{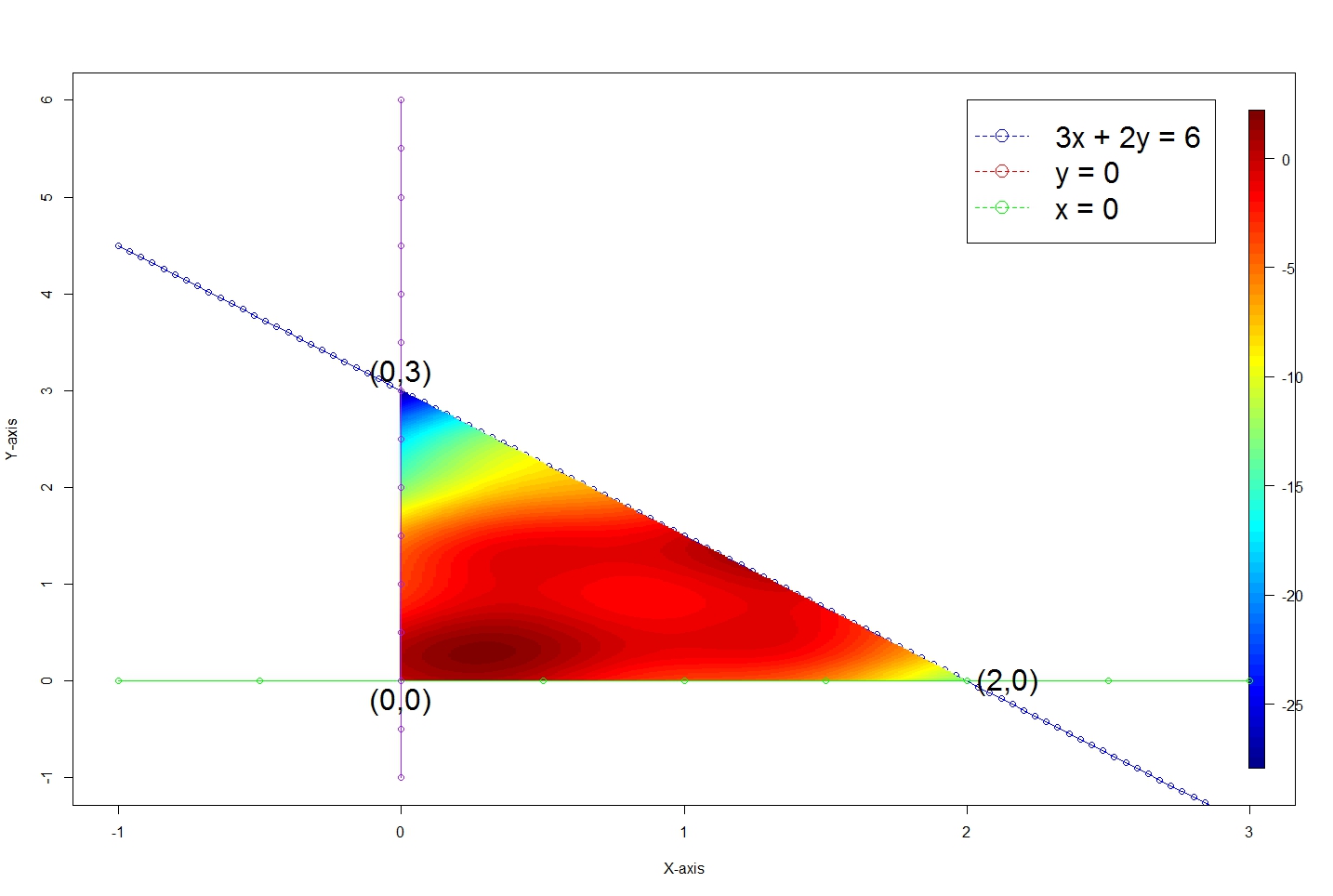}\label{fig:example_sin_cos} }
     \subfigure[]{\includegraphics[width=0.5\textwidth]{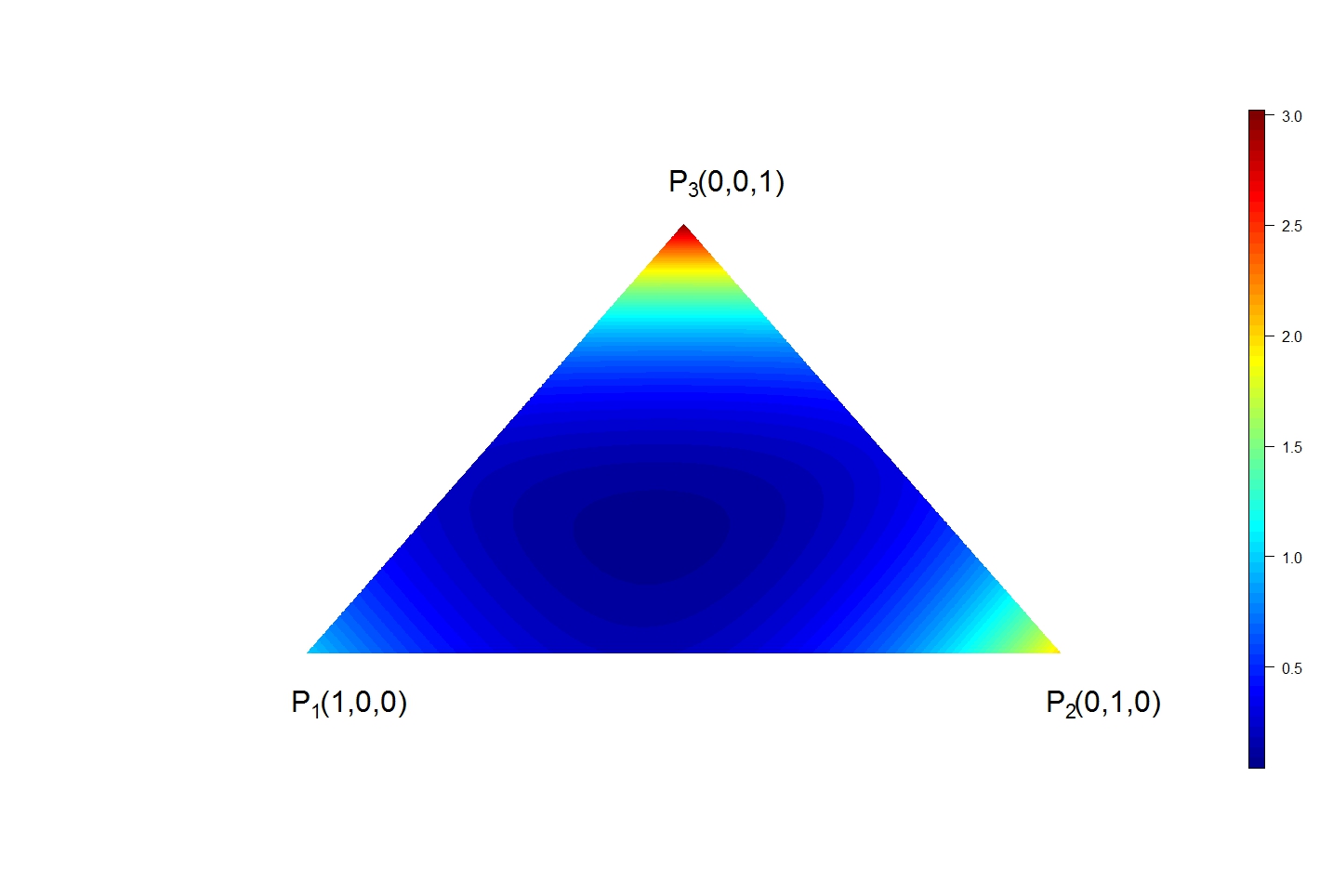}\label{fig:example_power_4} }
       \caption{(a) Heat-map of $f(\mathbf{p})$ on restricted $X-Y$ plane in Example (\ref{sect:sin}). (b) Heat-map of $f(\mathbf{p})$ for $n=3$ in Example (\ref{sect:border}).}
 \end{figure*}
 
  \begin{table}[]
  \centering
  \resizebox{\columnwidth}{!}{%
  \begin{tabular}{|l|c|c|c|c|c|c|}
  \hline
  \multicolumn{1}{|c|}{\multirow{2}{*}{Algorithms}} & \multicolumn{2}{c|}{Example \ref{sect:gauss}} & \multicolumn{2}{c|}{Example \ref{sect:easom}} & \multicolumn{2}{c|}{Example \ref{sect:sin}} \\ \cline{2-7} 
  \multicolumn{1}{|c|}{} & \begin{tabular}[c]{@{}c@{}}Success \\ (\%)\end{tabular} & \begin{tabular}[c]{@{}c@{}}Avg. Time\\ (sec)\end{tabular} & \begin{tabular}[c]{@{}c@{}}Success\\  (\%)\end{tabular} & \begin{tabular}[c]{@{}c@{}}Avg. Time\\ (sec)\end{tabular} & \begin{tabular}[c]{@{}c@{}}Success\\ (\%)\end{tabular} & \begin{tabular}[c]{@{}c@{}}Avg. Time\\ (sec)\end{tabular} \\ \hline
  RMPSS & 100 & 0.071 & 100 & 0.016 & 100 & 0.019 \\ \hline
  SQP & 67 & 0.014 & 31 & 0.014 & 44 & 0.025 \\ \hline
  IP & 71 & 0.023 & 47 & 0.030 & 47 & 0.014 \\ \hline
  GA & 100 & 1.494 & 100 & 1.354 & 100 & 1.270 \\ \hline
  \end{tabular}}
   \caption{Comparison of required time and number of successful convergence for solving the problems in Example (\ref{sect:gauss}), (\ref{sect:easom}) and (\ref{sect:sin}) using  RMPSS, SQP, IP and GA starting from 100 randomly generated points.}
   \label{table:example_123}
  \end{table}
 \subsection{Optimization of function with multiple local maximums on boundary points for various dimensions}
  \label{sect:border}
Consider the problem 
 \begin{align}
 maximize \;:& \; \sum_{i=1}^nip_i^4  \nonumber \\
subject \; to \; :& \; p_i \geq 0,\; i=1,\cdots,n \; \quad \sum_{i=1}^np_i = 1,
\label{eq:example_power_4}
\end{align}
where $n$ is any positive integer. Note that each boundary point of the simplex constrained space is a local maximum for this function. But the global maximum occurs at $\hat{\mathbf{p}}=(p_1,\cdots,p_n)$ where $p_1=\ldots=p_{n-1}=0$ and $p_n=1$ attaining global maximum 1. With increasing value of $n$, it gets harder to estimate the global maximum. In Figure \ref{fig:example_power_4}, we plot the heat map of $f(\mathbf{p})$ for $n=3$. It can be seen that this function has three local maxima at $P_1=(1,0,0), P_2=(0,1,0)$ and $P_3=(0,0,1)$ out of which $P_3 = (0,0,1)$ is the global maxima. Starting from 100 randomly generated points within simplex constrained space, we perform a comparative study of performances of all the above-mentioned algorithms for different possible values of $n=5,10,25,50,100$. In Table (\ref{table:example_power_4}), it is noted that unlike GA, RMPSS works well for high-dimensional cases as well. It is also noted that the required computation time for RMPSS algorithm increases approximately at a linear rate with dimension of the problem.
\begin{table}[]
\centering
\resizebox{\columnwidth}{!}{%
\begin{tabular}{|l|c|c|c|c|c|c|c|c|c|c|}
\hline
\multirow{2}{*}{Algorithms} & \multicolumn{2}{c|}{n=5} & \multicolumn{2}{c|}{n=10} & \multicolumn{2}{c|}{n=25} & \multicolumn{2}{c|}{n=50} & \multicolumn{2}{c|}{n=100} \\ \cline{2-11} 
 & \begin{tabular}[c]{@{}c@{}}No. of\\ success\end{tabular} & \begin{tabular}[c]{@{}c@{}}Avg.\\ time\end{tabular} & \begin{tabular}[c]{@{}c@{}}No. of\\ success\end{tabular} & \begin{tabular}[c]{@{}c@{}}Avg.\\ time\end{tabular} & \begin{tabular}[c]{@{}c@{}}No. of\\ success\end{tabular} & \begin{tabular}[c]{@{}c@{}}Avg.\\ time\end{tabular} & \begin{tabular}[c]{@{}c@{}}No. of\\ success\end{tabular} & \begin{tabular}[c]{@{}c@{}}Avg.\\ time\end{tabular} & \begin{tabular}[c]{@{}c@{}}No. of\\ success\end{tabular} & \begin{tabular}[c]{@{}c@{}}Avg.\\ time\end{tabular} \\ \hline
RMPSS & 100 & 0.040 & 100 & 0.079 & 100 & 0.226 & 100 & 0.394 & 100 & 0.909 \\ \hline
SQP & 31 & 0.008 & 22 & 0.010 & 16 & 0.017 & 15 & 0.028 & 7 & 0.063 \\ \hline
IP & 30 & 0.022 & 22 & 0.026 & 15 & 0.052 & 22 & 0.094 & 26 & 0.183 \\ \hline
GA & 4 & 2.910 & 0 & 55.762 & 0 & 51.091 & 0 & 50.345 & 0 & 53.232 \\ \hline
\end{tabular}
}
\caption{Comparison of required time and number of successful convergence for solving the problem in Example (\ref{sect:border}) using  RMPSS, SQP, IP and GA for $n=5,10,25,50,100$ starting from 100 randomly generated points in each case.}
\label{table:example_power_4}
\end{table}
 \subsection{Transformed Ackley's Function on Simplex}
 \label{sect:ackley}
 Consider a function $f$ needs to be minimized on a $d$-dimensional hypercube $D^d$ where $D=[l,u]$ for some constants $l,u$ in $\mathbb{R}$. Consider the bijection map $g : D \mapsto [0,\frac{1}{d}]$ such that $g(x_i) = y_i=\frac{x_i-l}{d(u-l)}$ for $i=1,\ldots,d$. Replacing the original parameters of the problem with the transformed parameters we get
   \begin{align*}
   f(x_1,\ldots,x_d) = f(g^{-1}(y_1), \ldots, g^{-1}(y_d)).
   \end{align*}
   Now, define $h : [0,\frac{1}{d}]^d \mapsto \mathbb{R}$ such that $$h(\mathbf{y})=h(y_1,\ldots,y_d)=f(g^{-1}(y_1), \ldots, g^{-1}(y_d)).$$ 
    Consider the set $S = \{(z_1,\ldots,z_d) \; | z_i \geq 0, \; \sum_{i=1}^dz_i \leq 1 \}$. Clearly $[0,\frac{1}{d}]^{d} \subset S$. Define $h^{\prime} : S \mapsto \mathbb{R}$ which is equal to function $h$ considered on the extended domain $S$. We have $y_i \in [0,\frac{1}{d}]$ for $i=1,\ldots,d$ and $0 \leq \sum_{i=1}^dy_i \leq 1$. Define $y_{d+1} = 1 - \sum_{i=1}^dy_i$. Hence $0 \leq y_{d+1} \leq 1$ and $\sum_{i=1}^{d+1}y_i = 1$. So we can conclude that $\bar{\mathbf{y}} = [\mathbf{y}, \; y_{d+1}]\in \Delta^d$ where $\mathbf{y} = (y_1,\ldots,y_{d})$ and 
 \begin{align*}
 \Delta^{d} = \{(y_{1}, \ldots, y_{d+1}) \in \mathbb{R}^{d+1} \; | \; y_i \geq 0, \;  i=1,\ldots,d+1, \; \sum_{i=1}^{d+1}y_i= 1\}.
 \end{align*}
 Now define $\bar{h} : \Delta^d \mapsto \mathbb{R}$ such that $ \bar{h}(\bar{\mathbf{y}}) = \bar{h}(y_1,\ldots,y_{d+1}) = h^{\prime}(y_1,\ldots, y_d)$ for $\bar{\mathbf{y}} \in \Delta^{d}$. Note that $\bar{\mathbf{y}} \in \Delta^{d}$ implies $(y_1,\ldots, y_d) \in S$. Suppose the global minimum of the function $f$ occurs at $(m_1,\ldots,m_d)$ in $D^d$. Hence, the function $\bar{h}$ has the global minimum at $\bar{\mathbf{y}} = \big(g^{-1}(m_1),\ldots, g^{-1}(m_d), 1 - \sum_{i=1}^d g^{-1}(m_i) \big)$ in $\Delta^d$. \\
 
 $d$-dimensional Ackley's function is given by
 \begin{align*}
f(x_1,\ldots,x_d) = -20 \exp(-0.2\sqrt{0.5\sum_{i=1}^dx_i^2}) - \exp(0.5\sum_{i=1}^d\cos(2\pi x_i))+e+20.
 \end{align*}
 The domain of $\mathbf{x}=(x_1,\ldots,x_d)$ is generally taken to be $[-5,5]^d$. The global minimum of $f$ is $0$ (even if considered on $\mathbb{R}^d$) which occurs at $\mathbf{x}^*=(x_1^*,\ldots,x_d^*)=(0,\ldots,0)$. After doing the above mentioned transformations taking $l=-5$ and $u=5$, we get the transformed Ackley's function on a $d$-dimensional unit-simplex $\Delta^d$ given by
 \begin{align*}
 \bar{h}(\bar{\mathbf{y}})= & \bar{h}(y_1,\ldots,y_{d+1}) \\
 = & -20 \exp(-0.2\sqrt{0.5\sum_{i=1}^d(g^{-1}(y_i))^2} - \exp(0.5\sum_{i=1}^d\cos(2\pi g^{-1}(y_i)))+e+20.
 \end{align*}
The global minimum of the transformed Ackley's function on simplex occurs at $\bar{\mathbf{y}}_{1 \times (d+1)}^*=(\frac{1}{2d},\ldots,\frac{1}{2d}, \frac{1}{2})$ which is found using inverse transformation on $\mathbf{x}^*$ as mentioned above. For comparative study we considered RMPSS algorithm for three set of parameter values; the first setup being the default parameter values (as mentioned in Section \ref{sec_algo}) and the other two sets of parameter values being RMPSS (pl1) (`pl' stands for precision level) and RMPSS (pl2). In RMPSS (pl1), we take $\lambda=\phi=10^{-5}$ and for RMPSS (pl2), we take $\lambda=\phi=10^{-7}$ keeping the values of the other parameters same as in the default setup. The main motive behind considering different sets of parameters is to find how the computation performance and time of RMPSS vary as the we increase the precision of the solution. For each algorithm, transformed Ackley's function is optimized for $d=5,10,25,50$ and $100$. In each case, the objective function is minimized starting from 100 randomly chosen points. In Table 3, the average computation time and the minimum value achieved by each algorithm are noted. It is noted that for this function, RMPSS algorithm outperforms all other algorithms by a large margin both in terms of proportion of successful convergences and computation time. It is also observed that taking smaller values of $\lambda$ and $\phi$ improves the accuracy of the solution at the cost of higher computation time. Note that for $d=5$, RMPSS (pl1) yields better solution than GA with a 235 folds improvement in average computation time.
 \subsection{Transformed Griewank's Function on Simplex}
  \label{sect:grie}
 $d$-dimensional Griewank's function is given by
 \begin{align*}
f(x_1,\ldots,x_d) = \frac{1}{4000}\sum_{i=1}^dx_i^2 - \prod_{i=1}^d\cos\big(\frac{x_i}{\sqrt{i}}\big) + 1.
 \end{align*}
 The domain of $\mathbf{x}=(x_1,\ldots,x_d)$ is generally taken to be $[-500,500]^d$. The global minimum of $f$ is $0$ (even if considered on $\mathbb{R}^d$) which occurs at $\mathbf{x}^*=(x_1^*,\ldots,x_d^*)=(0,\ldots,0)$. Similar to the previous problem, after performing the previously mentioned transformations taking $l=-500$ and $u=500$, we get the transformed Griewank's function on a $d$-dimensional unit-simplex $\Delta^d$ given by
 \begin{align*}
 \bar{h}(\bar{\mathbf{y}})= & \bar{h}(y_1,\ldots,y_{d+1}) \\
 = & \frac{1}{4000}\sum_{i=1}^d(g^{-1}(y_i))^2 - \prod_{i=1}^d\cos\big(\frac{g^{-1}(y_i)}{\sqrt{i}}\big) + 1.
 \end{align*}
Similar to the previous scenario, in this case also the global minimum occurs at $\bar{\mathbf{y}}_{1 \times (d+1)}^*=(\frac{1}{2d},\ldots,\frac{1}{2d}, \frac{1}{2})$. We perform the comparative performance study for this function under the similar setup of Example (\ref{sect:ackley}). In this case, RMPSS (pl1 \& pl2) outperforms other methods for high-dimensional cases, while SQP and IP perform relatively better than RMPSS (default, pl1 \& pl2) and GA for smaller dimensional cases. In this case also, the solution improves taking the values of parameters $\lambda$ and $\phi$ smaller in RMPSS. Note that for $d=5$, more than 338 folds improvement in average computation time is noted using RMPSS(pl1) over GA along with more accuracy of the solution.
 \subsection{Transformed Rastrigin's Function on Simplex}
  \label{sect:rast}
 $d$-dimensional Rastrigin's function is given by
  \begin{align*}
 f(x_1,\ldots,x_d) = 10d+\sum_{i=1}^d[x_i^2-10\cos(2\pi x_i)]
  \end{align*}
  The domain of $\mathbf{x}=(x_1,\ldots,x_d)$ is generally taken to be $[-5,5]^d$. After transformation in the above-mentioned way, the transformed Rastrigin's function on $\Delta^d$ is given by
 \begin{align*}
  \bar{h}(\bar{\mathbf{y}})= & \bar{h}(y_1,\ldots,y_{d+1}) \\
  = & 10d+\sum_{i=1}^d[(g^{-1}(y_i))^2-10\cos(2\pi g^{-1}(y_i))]
  \end{align*}
  The global minimum of $\bar{h}$ occurs at $\bar{\mathbf{y}}_{1 \times (d+1)}^*=(\frac{1}{2d},\ldots,\frac{1}{2d}, \frac{1}{2})$ which follows from the fact that the global minimum of the original form of Rastrigin's function occurs at $\mathbf{x}^*=(x_1^*,\ldots,x_d^*)=(0,\ldots,0)$. Comparative study of performances of the algorithms are carried out for this function under the same setup as in (\ref{sect:ackley}). In Table \ref{table:big} it is noted that agian RMPSS outperforms other algorithms by a large margin. It is noted that for $d=5$, RMPSS (pl2) provides more accurate solution than GA with a 253 folds improvement in average computation time.
\begin{table}[]
\begin{adjustbox}{addcode={\begin{minipage}{1\width}}{\caption{Comparison of minimum value achieved and average computation time (in seconds) for solving transformed $d$-dimensional Ackley's function, Griewank's function and Rastrigin's function on simplex for $d = 5,10,25,50$ and $100$ using  RMPSS (deafult, lp1 \& lp2), SQP, IP and GA for $d=5,10,25,50,100$ starting from 100 randomly generated points in each case.}
\label{table:big}
\end{minipage}},rotate=90,center}
\resizebox{1.6\columnwidth}{!}{%
\bgroup
\def\arraystretch{1.5}
\begin{tabular}{|c|l|c|c|c|c|c|c|c|c|c|c|}
\hline
\multirow{2}{*}{Functions} & \multirow{2}{*}{Algorithms} & \multicolumn{2}{c|}{d = 5} & \multicolumn{2}{c|}{d = 10} & \multicolumn{2}{c|}{d = 25} & \multicolumn{2}{c|}{d = 50} & \multicolumn{2}{c|}{d = 100} \\ \cline{3-12} 
 &  & \begin{tabular}[c]{@{}c@{}}Min.\\ value\end{tabular} & \begin{tabular}[c]{@{}c@{}}Avg.\\ time\end{tabular} & \begin{tabular}[c]{@{}c@{}}Min.\\ value\end{tabular} & \begin{tabular}[c]{@{}c@{}}Avg.\\ time\end{tabular} & \begin{tabular}[c]{@{}c@{}}Min.\\ value\end{tabular} & \begin{tabular}[c]{@{}c@{}}Avg.\\ time\end{tabular} & \begin{tabular}[c]{@{}c@{}}Min.\\ value\end{tabular} & \begin{tabular}[c]{@{}c@{}}Avg.\\ time\end{tabular} & \begin{tabular}[c]{@{}c@{}}Min.\\ value\end{tabular} & \begin{tabular}[c]{@{}c@{}}Avg.\\ time\end{tabular} \\ \hline
\multirow{6}{*}{\begin{tabular}[c]{@{}c@{}}Ackley's Function\\ (transformed)\end{tabular}} & RMPSS & 3.24e - 02 & 0.099 & 1.16e - 01 & 0.199 & 3.74e - 01 & 0.546 & 9.24e - 01 & 1.189 & 1.34e - 00 & 3.685 \\ \cline{2-12} 
 & RMPSS (pl1) & 2.88e - 04 & 0.163 & 4.91e - 04 & 0.314 & 2.59e - 03 & 0.876 & 5.15e - 03 & 2.234 & 1.08e - 02 & 6.277 \\ \cline{2-12} 
 & RMPSS (pl2) & 1.31e - 06 & 0.186 & 6.45e - 06 & 0.394 & 2.47e - 05 & 1.194 & 4.97e - 05 & 3.063 & 1.04e - 04 & 9.307 \\ \cline{2-12} 
 & SQP & 1.65e - 00 & 0.032 & 2.01e - 00 & 0.060 & 4.71e - 00 & 0.192 & 1.73e - 00 & 0.541 & 1.27e - 00 & 2.861 \\ \cline{2-12} 
 & IP & 2.32e - 00 & 0.078 & 2.32e - 00 & 0.139 & 8.58e - 00 & 0.361 & 1.32e + 01 & 0.865 & 1.49e + 01 & 2.863 \\ \cline{2-12} 
 & GA & 8.48e - 04 & 38.395 & 3.60e - 00 & 40.653 & 1.12e + 01 & 40.097 & 1.44e + 01 & 39.761 & 1.60e + 01 & 46.626 \\ \hline
\multirow{6}{*}{\begin{tabular}[c]{@{}c@{}}Griewank's Function\\ (transformed)\end{tabular}} & RMPSS & 8.39e - 02 & 0.069 & 6.44e - 01 & 0.103 & 1.25e - 00 & 0.322 & 2.81e - 00 & 0.795 & 1.30e + 01 & 2.152 \\ \cline{2-12} 
 & RMPSS (pl1) & 7.65e - 03 & 0.111 & 8.87e - 03 & 0.213 & 6.24e - 03 & 0.599 & 2.60e - 02 & 1.518 & 9.12e - 02 & 5.609 \\ \cline{2-12} 
 & RMPSS (pl2) & 7.40e - 03 & 0.136 & 7.40e - 03 & 0.248 & 4.87e - 07 & 0.808 & 2.64e - 06 & 2.186 & 1.24e - 05 & 6.345 \\ \cline{2-12} 
 & SQP & 3.70e - 01 & 0.041 & 8.84e - 09 & 0.084 & 6.70e - 08 & 0.267 & 1.24e - 05 & 0.784 & 3.51e - 04 & 3.138 \\ \cline{2-12} 
 & IP & 2.04e - 01 & 0.102 & 2.33e - 09 & 0.174 & 2.47e - 08 & 0.294 & 1.86e - 07 & 0.586 & 1.23e - 06 & 1.802 \\ \cline{2-12} 
 & GA & 1.94e - 02 & 37.578 & 1.23e + 01 & 37.765 & 7.52e + 02 & 38.598 & 3.52e + 03 & 40.148 & 1.06e + 04 & 46.527 \\ \hline
\multirow{6}{*}{\begin{tabular}[c]{@{}c@{}}Rastrigin's Function\\ (transformed)\end{tabular}} & RMPSS & 1.25e - 00 & 0.068 & 2.68e - 00 & 0.125 & 9.21e - 00 & 0.300 & 1.60e + 01 & 0.859 & 4.77e + 01 & 2.349 \\ \cline{2-12} 
 & RMPSS (pl1) & 3.07e - 05 & 0.118 & 3.98e - 00 & 0.211 & 2.61e - 03 & 0.640 & 9.97e - 00 & 1.453 & 6.15e - 00 & 4.213 \\ \cline{2-12} 
 & RMPSS (pl2) & 1.51e - 09 & 0.163 & 3.98e - 00 & 0.333 & 2.55e - 07 & 0.929 & 9.95e - 01 & 2.084 & 5.97e - 00 & 6.014 \\ \cline{2-12} 
 & SQP & 2.98e - 00 & 0.030 & 2.19e + 01 & 0.062 & 1.79e + 02 & 0.220 & 4.05e + 02 & 0.718 & 6.85e + 02 & 3.578 \\ \cline{2-12} 
 & IP & 9.95e - 00 & 0.076 & 6.17e + 01 & 0.138 & 2.01e + 02 & 0.947 & 4.25e + 02 & 15.106 & 8.04e + 02 & 3.342 \\ \cline{2-12} 
 & GA & 6.8e - 06 & 41.271 & 3.24e + 01 & 40.772 & 4.68e + 02 & 40.730 & 1.88e + 03 & 42.553 & 5.32e + 03 & 45.797 \\ \hline
\end{tabular}
\egroup}
\end{adjustbox}
\end{table}

 \section{Application to analysis of Atlantic Hurricane Intensity Data using Simultaneous Quantile Regression}
 \label{sec_application}
 In 2008, \cite{Elsner2008} argued that the higher velocity hurricanes in the North Atlantic basin have got more stronger in the last couple of decades. In order to analyze the validity of this argument, we consider the Hurricane velocity data\footnote{\url{http://weather.unisys.com/hurricanes}} of North Atlantic region during 1981-2006 and we perform quantile regression, higher estimated quantile levels being our main point of interest. \cite{Das2017a} proposed a Bayesian method for quantile regression method using B-spline (\cite{Boor2001}) series for estimating the whole quantile curve simultaneously. The main challenge of estimating the whole quantile curve simultaneously remains in maintaining the monotonicity of the quantile curves, as any upper quantile curve should be always at the same level or higher than other lower quantile curves. After transforming the univariate predictor and the response variables to unit intervals by monotonic transformation, the estimation of the whole quantile curve in \cite{Das2017a} comes down to estimating two monotonically increasing differentiable functions $\xi_1(\cdot)$ and $\xi_2(\cdot)$ (see Appendix B for details) such that both map unit interval to another unit interval (i.e., each of them is a diffeomorphism of $[0,1]$ onto itself). \cite{Das2017a} used B-spline basis functions to estimate the functions $\xi_1(\cdot)$ and $\xi_2(\cdot)$ as monotonicity can easily be imposed in B-Splines by taking increasing coefficients of the B-spline basis functions (\cite{Boor2001}). 
 
 Suppose, we take equidistant knots $0=t_0 < t_1<\cdots < t_k = 1$ on the interval $[0,1]$ where $t_i = i/k$, $i=0,1,\ldots, k$ where $k$, a positive integer, denotes the number of divided components of unit interval. Let $h$ denote the degree of the B-spline (e.g., $h=2,3$ denotes the quadratic and cubic splines respectively). Let $\{B_{j,h}(t)\}_{j=1}^{k+h}$ denote the basis functions of $h$-th degree B-splines on unit interval. Then the B-spline expansion of the diffeomorphisms $\xi_1(\cdot)$ and $\xi_2(\cdot)$ are given by
 \begin{align}
     \xi_1(\tau) = \sum_{j=1}^{k+h}\theta_jB_{j,h}(\tau), \; 0=\theta_1 \leq \theta_2 \leq \cdots \leq \theta_{k+h} = 1, \nonumber \\
     \xi_2(\tau) = \sum_{j=1}^{k+h}\phi_jB_{j,h}(\tau), \; 0=\phi_1 \leq \phi_2 \leq \cdots \leq \phi_{k+h} = 1, \nonumber
 \end{align}
 where $\tau \in [0,1]$ denotes the quantile level. Now, define $$\gamma_j = \theta_{j+1} - \theta_{j}, \delta_j = \phi_{j+1}-\phi_j, \; j=1,\ldots,k+h-1.$$ Note that $$\gamma_j,\delta_j \geq 0, j=1,\ldots,k+h-1, \sum_{j=1}^{k+h-1}\gamma_j = \sum_{j=1}^{k+h-1}\delta_j = 1,$$ which implies $G =\{\gamma_j\}_{j=1}^{k+h-1}$ and $D =\{\delta_j\}_{j=1}^{k+h-1}$ are on unit simplexes. Therefore, given the dataset $\{(X_i,Y_i)\}_{i=1}^n$ ($n$ denotes sampl-size) the whole log-likelihood can be expressed as a function of two unit-simplexes $D$ and $G$, given by $$l\big(\{\theta_j\}_{j=1}^{k+h},\{\phi_j\}_{j=1}^{k+h}|(X_1,Y_1),\ldots,(X_n,Y_n)\big) = f(D,G).$$
 However, as discussed in \cite{Das2017a}, this log-likelihood does not have any closed form and evaluation of likelihood involves numerical integration, grid search and several other complex operations making it computationally expensive. For that reason it cannot be checked readily whether its concave or not analytically (see Appendix B for explicit form of the likelihood). Therefore, in order to avoid the computational burden of maximizing a (possibly) non-concave log-likelihood (which is equivalent to minimizing a non-convex function), \cite{Das2017a} considered Bayesian alternative to estimate $D$ and $G$ using posterior samples after incorporating Markov Chain Monte Carlo (MCMC) sampling. However, RMPSS can be used to maximize the log-likelihood since each set of unknown parameters $D$ and $G$ belongs to unit-simplex. 
 
 In order to analyze the North Atlantic Hurricane intensity data, we transform the $X$ (year) and $Y$ (velocities of Hurricane) to unit intervals in the same fashion as performed in \cite{Das2017a} and once the results are obtained, the estimated values are again transformed back to the original scale. Following the argument in \cite{Das2017a}, we take the degree of B-spline $h = 2$. The negative log-likelihood is minimized with RMPSS for the number of divided components of unit interval $k = 3,4, \ldots, 10$ and Akaike Information Criterion (AIC) is used to select the best possible value of $k$ which came up to be 5 for this dataset. Unlike the previous cases considered in this article, here two unit simplexes need to be estimated. Therefore, while using RMPSS to estimate $D$ and $G$, we update each simplex component alternatively within same iteration. In other words, while updating $D$ during $i$-th iteration, we fix the value of $G$ at $G^{(i-1)}$. Once $D$ is updated, we update $G$ by fixing the value of $D$ at $D^{(i)}$. In Figure \ref{fig:hurr_1} the estimated quantiles of the hurricane velocities are plotted during 1981--2006 along with the scatter-plot of the real data. Note that, for the higher estimated quantiles, the postive slope is more prominent than that of the lower quantiles which implies the higher velocity hurricanes have become more extreme over the time period while low velocity hurricanes tend to be similar during that period of investigation. Figure \ref{fig:hurr_2} shows how velocity of hurricane has changed over time at different quantile levels. Here, the gap between the velocities are higher at higher quantile and an increasing trend of velocities is observed over the time period. It should be noted that these plots looks similar to those estimated in \cite{Das2017a} using Bayesian methods.
 
  \begin{figure*}
     \subfigure[]{\includegraphics[width=0.5\textwidth]{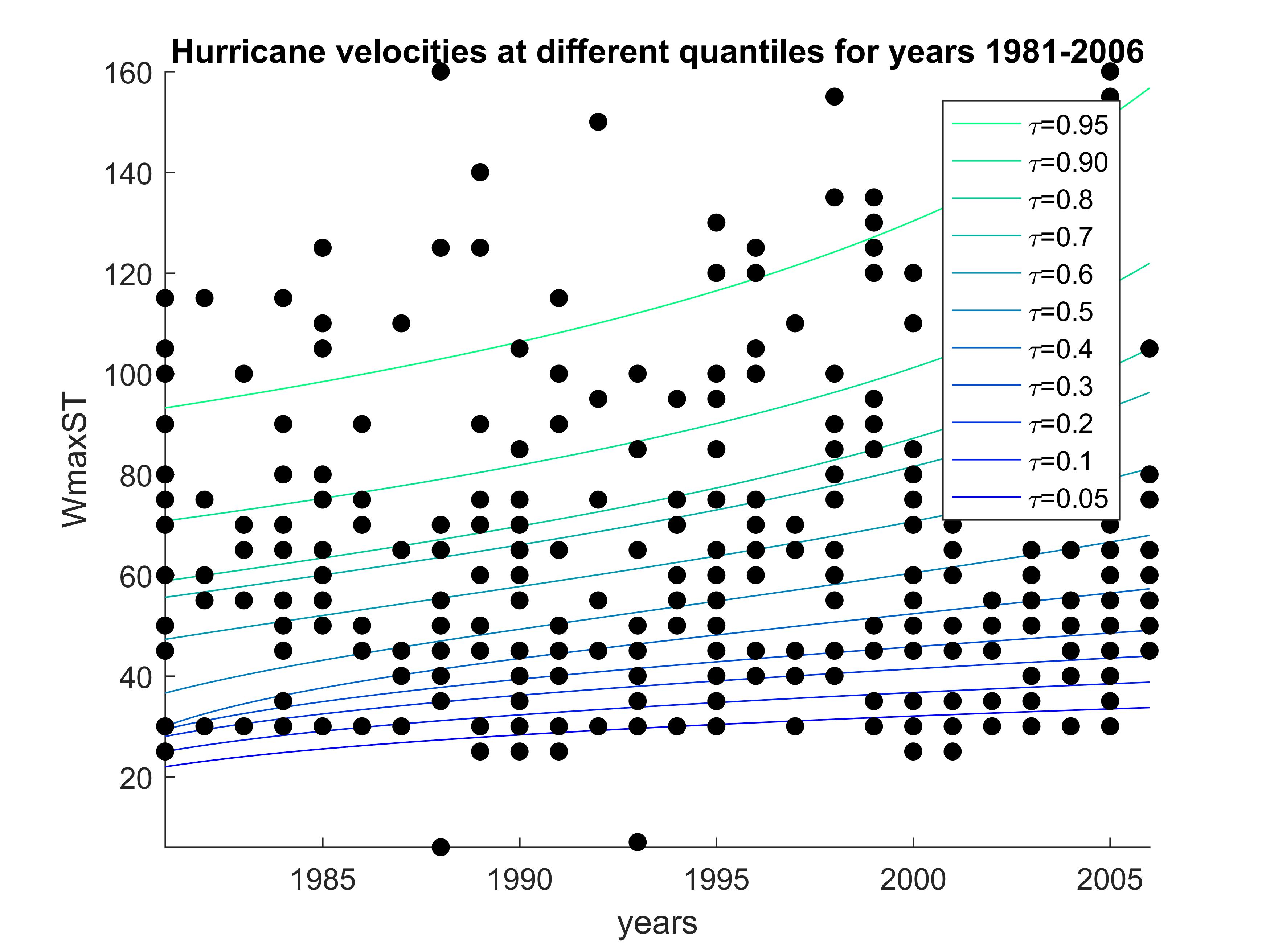}\label{fig:hurr_1}}
     \subfigure[]{\includegraphics[width=0.5\textwidth]{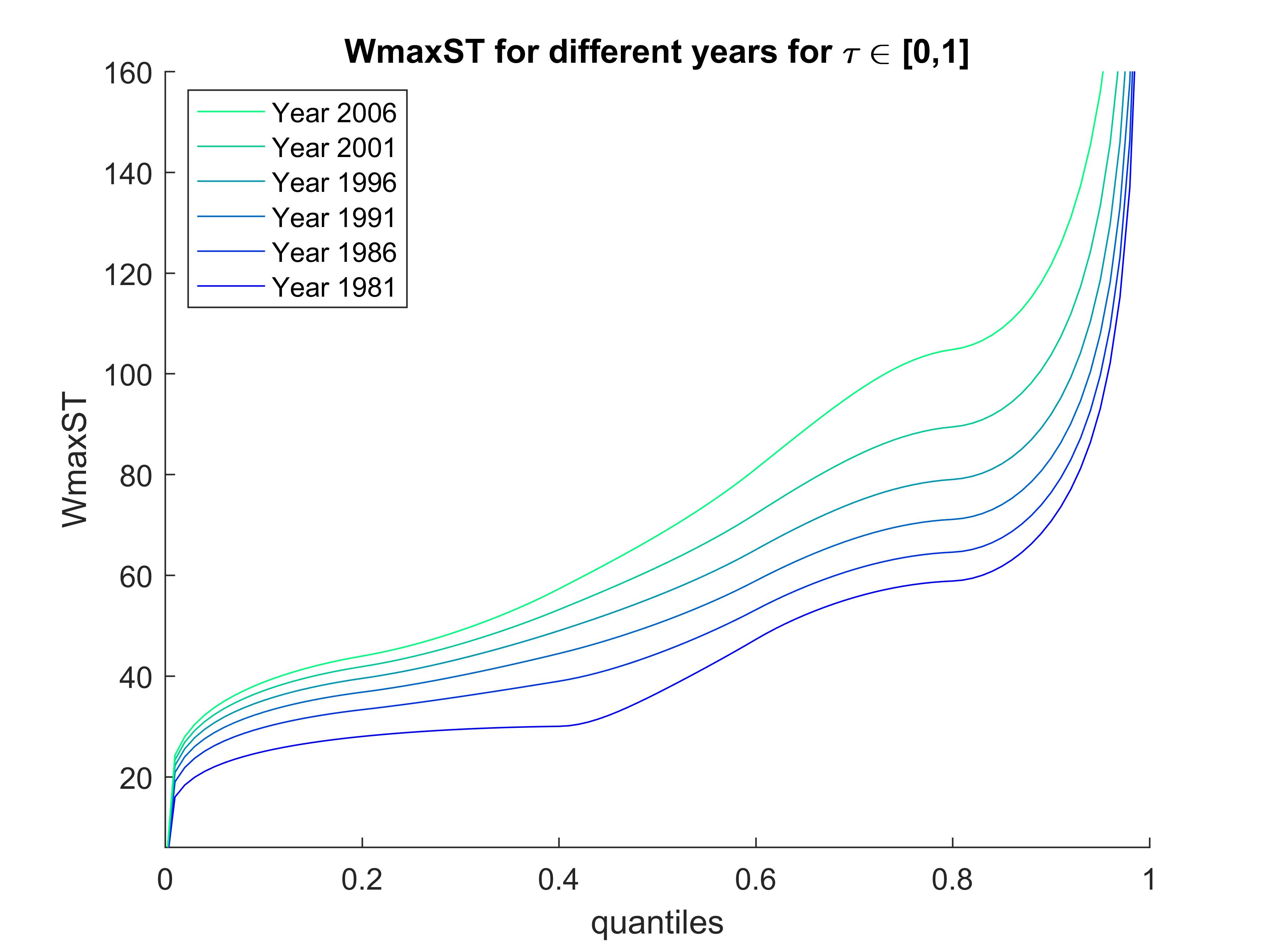}\label{fig:hurr_2}}
       \caption{(a) Estimated quantile curves (using RMPSS) of hurricane velocities during the period 1981--2006 for the quantiles $\tau = \{0.05,0.1,\ldots,0.9,0.95\}$, over the scatter-plot of hurricane velocities over that time period. (b) Hurricane velocities as a function of quantiles for years 1981, 1986, 1991, 1996, 2001, 2006.}
 \end{figure*}
 
 \underline{\it Parallel computation}~~ As mentioned in Section \ref{sec1} and \ref{sec_algo}, in RMPSS, within each iteration, since the objective function is evaluated in $2m$ (where $m$ denotes the dimension of the simplex parameter) independent directions, upto $2m$ parallel threads can be used. While using parallel computing, some time is spent to  distribute works to parallel threads and to collect (or combine) the results from those parallel threads after the computational tasks are over within all parallel threads. The time spent for allocating and collecting results while using parallel threads depends on the software\footnote{\url{https://www.mathworks.com/help/optim/ug/improving-performance-with-parallel-computing.html}}. In case the objective function is relatively inexpensive to compute, or in other words, if the computation time of evaluating the objective function is too small compared to the amount of extra time spent for parallel thread computing, little to no improvement in computation time might be gained for using parallel threading over single threading. However, in case the objective function is relatively expensive, the improvement of computation time using parallel threading becomes more perceivable since the time spent on allocation and collection of results from parallel threads does not differ much based on how expensive the objective function is; also as the objective function becomes more expensive to evaluate, that total amount of extra time required specifically for parallel threading starts getting smaller  compared to the total time spent for other computational operations. Thus, as the objective function becomes more expensive, more gain in computational time is obtained by using parallel over single threading.
 
 Since all the functions considered in the previous simulation scenarios are quite inexpensive in terms of computation time, we only consider single threading instead of parallel threading. But the log-likelihood considered in the North Atlantic Hurricane data analysis is computationally quite expensive. Since the total log-likelihood is the sum of the log-likelihood evaluated at each data-point, as the sample size increases, the computation time of the log-likelihood would also increase linearly with sample size. Thus, in case we may increase the sample size in the data, we can make the objective function (the negative of the log-likelihood) more time consuming. In the real data, the sample size is $380$. To provide a better understanding of the relation of the computation time as the computational time of the objective function increases, we consider one real and two artificial sample size scenarios denoted by \textit{Scenario 1}, \textit{Scenario 2} and \textit{Scenario 3}. In \textit{Scenario 1} we consider the sample size to be 380 as provided in the data-set. To make the evaluation of the log-likelihood function even more time consuming, in \textit{Scenario 2} we consider a dataset of size 7600 which is obtained by replicating the true dataset 20 times. In \textit{Scenario 3} we replicate the true sample 100 times to obtain the sample of size 38000. For the analysis, we consider quadratic B-spline ($k=2$) and for each scenario of sample sizes we consider 3 values of $h$ which are $\{9,19,39\}$; thus making the dimension of the each simplex parameter ($D$ and $G$) to be $m = k+h-1 = 10,20,40$. The main motivation of considering 3 different values of $m$ lies in the fact that as the size of parameter space increases, parallel computing is expected to perform faster compared to single threading because of distributing the job of function evaluations to multiple cores/threads instead of doing it using single core. For all the considered sample size scenarios, for those 3 above-mentioned values of $m$, we perform both single and parallel threading RMPSS. For parallel threading, we use desktop of same specification as mentioned in Section \ref{sec_app} with 12 threads incorporated by \texttt{parfor} loop in MATLAB 2016b. 
 \begin{table}[]
 \centering
 \resizebox{0.8\columnwidth}{!}{%
\begin{tabular}{|l|c|c|c|}
\hline
Experiments & $m$ & \begin{tabular}[c]{@{}c@{}}Time required\\ Parallel thread.\\ (seconds)\end{tabular} & \begin{tabular}[c]{@{}c@{}}Time required\\ Single thread.\\ (seconds)\end{tabular} \\ \hline
\multirow{3}{*}{Scenario 1} & 10 & 34.82 & 3.34 \\ \cline{2-4} 
 & 20 & 36.55 & 6.69 \\ \cline{2-4} 
 & 40 & 42.22 & 15.36 \\ \hline
\multirow{3}{*}{Scenario 2} & 10 & 56.79 & 63.52 \\ \cline{2-4} 
 & 20 & 66.63 & 128.95 \\ \cline{2-4} 
 & 40 & 92.46 & 298.79 \\ \hline
\multirow{3}{*}{Scenario 3} & 10 & 111.59 & 311.53 \\ \cline{2-4} 
 & 20 & 147.26 & 641.55 \\ \cline{2-4} 
 & 40 & 253.72 & 1472.10 \\ \hline
\end{tabular}}
\label{table:parallel}
\caption{Comparison of computation time to estimate the quantile regression curves for \textit{Scenario 1}, \textit{Scenario 2} and \textit{Scenario 3} with 1, 20 and 100 replications of North Atlantic Hurricane dataset (size $n =380$) respectively using parallel (12 threads) and single threading RMPSS. Computation times are noted in seconds.}
\end{table}
 The comparison of computation times are is provided in Table \ref{table:parallel}. Note that, the objective function being less expensive for \textit{Scenario 1}, single thread computing yields faster result. However, as the objective function becomes more challenging in \textit{Scenario 2} and \textit{Scenario 3}, an improvement in the performance of parallel threading is observed. We obtain upto 5.8 fold improvement in computation time in \textit{Scenario 3}. It is expected that more computational gain (upto 12 times, since 12 parallel threads are used) could be obtained for more expensive objective function scenarios. Also note that, within any \textit{Scenario}, the computation time for single threading increases almost at a linear rate as $m$ increases. However it is not true for parallel threading scenarios. Because as mentioned earlier, the total computational time for parallel threading is the sum of the buffer time (for distribution and collection of parameter values from parallel threads) and the computational time for other operations. Now, similar amount of buffer time is spent for all $m$ values but only the other portion of computational time increases as $m$ increases. As the complexity of the objective function increases through \textit{Scenario 2} and \textit{Scenario 3}, the proportion of this required buffer time becomes lesser compared to the total computational time; thus difference of computation time across different $m$ becomes more prominent as the objective function gets more expensive. That explains the fact that the difference of computation times using parallel threading across different $m$ values  is more prominent for \textit{Scenario 3} compared to \textit{Scenario 1} and \textit{Scenario 2}.

 \section{Discussion}
 \label{sec_discussion}
 In this paper, a novel efficient Blackbox optimization algorithm is proposed where the parameters belong to unit-simplex. RMPSS can be considered as a variation of pattern search where candidate solutions are generated in the neighborhood of the current solution by making movements across the coordinates. However, the main challenging aspect of designing RMPSS remains in maintaining the simplex constraint along with required pattern-search based update step while looking for a better solution starting from any given solution. Unlike existing pattern-search algorithms, the re-start strategy of \textit{run}s considered in RMPSS is shown to contribute in yielding better solutions while optimizing a wide range of objective functions. The proposed algorithm being derivative-free, unlike some derivative-based algorithms (e.g., SQP), it could be used efficiently to optimize functions even if the closed form of the derivative of the objective function does not exist or it is expensive to evaluate. Unlike some other global optimization techniques (e.g., GA), at each step, the number of candidate solutions generated are in the order of the dimension of the parameter space (i.e., $2m$ where $m$ is the dimension of the parameter) and the objective function value can be evaluated in parallel. Thus, in RMPSS, upto $2m$ parallel threads can be used making the computation even faster for optimizing expensive high dimensional objective functions. A study is also considered showing how parallel computing can be incorporated to yield solutions faster in case the objective function is relatively expensive and/or high-dimensional.
 
Another novelty in proposing RMPSS remains in the introduction of the sparsity parameter $\lambda$ which can be used to induce sparsity in case the solution is known to be sparse in prior. The sparse solution technique is useful for several statistical problems e.g., estimating mixture proportions of mixture model with low sample size and high number of possible classes/clusters. Under the regularity conditions (mentioned in Section \ref{sec_theory}), it is also shown that execution of a single \textit{run} is sufficient to reach the global minimum. So, in case the objective function is known to follow those regularity conditions, we can set $max\_runs=1$ and in that scenario, RMPSS can be used to minimize any convex function just like any other convex optimization algorithms (e.g., SQP) where no extra time is spent on looking for other possible solutions once a local minimum is reached. RMPSS is also shown to yield better solution in lesser time (in general) compared to several existing convex and Blackbox optimization techniques based on the challenging optimization problems of several low, medium and high-dimensional objective functions. Upto 250 fold improvement in computation time is observed in using RMPSS over GA. Several scenarios are also considered to give the readers an idea about how the accuracy of the solution can be improved by changing a few tuning parameters in case further computation time is affordable.
 
RMPSS is used to estimate the simultaneous quantiles of the North Atlantic Hurricane velocities during the period 1981--2006 using a simultaneous quantile regression method (\cite{Das2017a}) where the likelihood does not have any closed form expression and along with presence of (possibly) multiple modes. It is noted that the higher velocity hurricanes became more stronger over the period of study while the velocity of the hurricanes belonging to lower quantile did not show much of an increasing trend.

We include a brief discussion on how RMPSS can be used to estimate the proportion vector for parameter estimation problem in case of univariate finite mixture model in Appendix C. In future, RMPSS can be extended for the parameter space which consists of multiple unit-simplexes. This principle can also be extended for spherically constrained parameter spaces which would be useful for estimating fixed norm regression coefficients and in the context of directional statistics.
 \begin{acknowledgements}
 I would like to thank Dr. Rudrodip Majumdar, Dr. Debraj Das, Dr. Suman Chakraborty and Dr. Kushal Dey for helping me editing the earlier drafts of this paper and for their valuable suggestions for improvements. I would also like to acknowledge my Ph.D. adviser Dr. Subhashis Ghoshal for his valuable suggestions and suggested statistical problems which made me think of this algorithm. Also, I would like to thank the reviewers for their valuable suggestions.
 \end{acknowledgements}

 
 

 \newpage
\appendix  
\section*{Appendix A : Discussion on the number of operations and objective function evaluations required at each iteration of RMPSS}
\label{sec_appendixA}
Here we find the order of the number of basic operations and the number of objective function evaluations required at each iteration in terms of the dimension of the parameter space. Therefore, we find the upper bound of number of operations required for the worst case scenario which would be sufficient to determine the order of the number of basic operations. 

Suppose we want to minimize $f(\mathbf{p})$ where $\mathbf{p} \in \mathbf{S}$. At the beginning of each iteration, 4 arrays of length $m$, i.e.,  $\mathbf{s}^+, \mathbf{s}^-, \mathbf{f}^+, \mathbf{f}^+$ are initialized (see step (2) of STAGE 1 in Section \ref{sec_algo}). During each iteration, starting from the current value of the parameter $2m$ candidate solutions are generated in a such way that each of them belongs to the domain $\mathbf{S}$. Search algorithm for first $m$ of these movements have been described in step (3) of STAGE 1  of Section \ref{sec_algo}.     

 In step (3) of STAGE 1  of Section \ref{sec_algo}, note that it requires not more than $m$ operations to find $S_i^+$. To find $K_i^+$, it takes at most $m$ operations. As we are considering the worst case scenario in terms of maximizing the number or required operations, assume $K_i^+ \geq 1$. In step (3.1) and (3.2) of STAGE 1, suppose the value of $s_i^+$ is updated atmost $k$ times. So, we have $\frac{s_{initial}}{\rho^{k-1}} \leq \phi$ but $\frac{s_{initial}}{\rho^{k-2}} > \phi$. Hence $k = 1+\big[\frac{\log(\frac{s_{initial}}{\phi})}{\log(\rho)}\big]$, where $[x]$ returns the largest interger less than or equal to $x$. Corresponding to each update step of $s_i^+$, first it is checked whether $s_i^+ \leq \phi$ or not. It involves a single operation. Then deriving $\mathbf{q}_i^+$ involves not more than $2m$ steps because the most complicated scenario occurs for updating the positions of $\mathbf{q}_i^+$ which belong to $S_i^+$. And in that case, it takes total two operations for each site, one operation to find $\frac{s_i^+}{K_i^+}$ and one more to evaluate to subtract that quantity from $p_i^{(l)}$ for $l \in S_i^+$. In order to check whether $\mathbf{q}_i^+ \in \mathbf{S}$ or not, it requires $m$ operations. For the worst case scenario, we also add one more step required for updating $s_i^+ = \frac{s_i^+}{\rho}$. Hence the search procedure of any movement (i.e., for any $i \in \{1,\ldots,m\}$) in step (3) of STAGE 1 requires $m+m+k \times (1+2m+m+1) = m\times (2+3k)+2k$ operations. Hence, for $m$ movements (mentioned in step (3) of STAGE 1 in Section \ref{sec_algo}) it requires not more than $m^2\times (2+3k)+2mk$ operations. In a similar way, it can be shown that for step (4) also the maximum number of required operations is not more than $m^2\times (2+3k)+2mk$. 

In step (5) of STAGE 1 in Section \ref{sec_algo},  to find $k_1$ or $k_2$, it takes $(m-1)$ operations. The required number of steps for this step will be maximized if $\min (f_{k_1}^+,f_{k_2}^-) < Y^{(j)}$. Under this scenario, two more operations (i.e., comparisons) are required to find $\textbf{p}_{temp}$. So this step requires not more than $2 \times (m-1) + 2 = 2m$ operations. 

In step (6) of STAGE 1 in Section \ref{sec_algo}, it takes at most $m$ operations to find $S_{updated}$. In case $K_{updated}$ is not $m$, the required number of operations required in this step would be more than the case when $K_{updated}=m$. For finding out the number of operations required for the worst case scenario, assume $K_{updated} < m$. To find the value of $garbage$, maximum number of required steps is not more than $m$. Finally, it can be noted that in step (6.2), updating the value of the parameter of interest from $\mathbf{p}^{(j)}$ to $\mathbf{p}^{(j+1)}$ requires not more than $2m$ steps. So maximum number of operations required for step (6) of STAGE 1 is not more than $m+m+2m=4m$.

In step (7) of STAGE 1 in Section \ref{sec_algo}, to find $(\mathbf{p}^{(j)}(i) - \mathbf{p}^{(j-1)}(i))^2$ for each $i \in \{1,\ldots,m\}$, we need one operation for taking difference, and one operation for taking the square. Hence to find the sum of the squares, it needs $(3m-1)$ more operations. Comparing it's value with $tol\_fun$ takes one more operation. In the worst case scenario, it would take two more operations till the end of the iteration, i.e., update of $s^{(j)}$ at step (7) and it's comparison with $\phi$ at step (8). Hence after step (6), the required number of operations would be at most $(3m-1)+1+2 = 3m+2$. 

Hence for each iteration, in the worst case scenario, the number of required basic operations is not more than $m^2(2+3k)+2mk+2m+3m+(3m+2) = m^2(2+3k) + m(2k+8)+2$. So, number of basic operations required for each iteration in our algorithm  is of $O(m^2)$ where $m$ is the number of parameters to estimate. 

Note that the number of times the objective function is evaluated in each iteration is $2m+1$ (once at step (2), $m$ times at step (3.3) and $m$ times at step (4.3) of STAGE 1 in Section \ref{sec_algo}). Thus, we note that the order of the number of function evaluations at each iteration step is of $O(m)$.
\newpage

\section*{Appendix B : Model and Likelihood of Simultaneous Quantile Regression}
\label{sec_appendixB}
Let $Q_y(\tau|x)=\text{inf}\{q\, : \, P(Y\leq q| X=x)\geq \tau\}$ denote the $\tau$-th conditional quantile of a response $Y$ at $X=x$ for $0\leq\tau\leq 1$, where $X$ is the predictor. A linear simultaneous quantile regression model for $Q_y(\tau|x)$ at a given $\tau$ is given by $$Q_y(\tau|x)=\beta_0(\tau)+x\beta(\tau)$$ where $\beta_0(\tau)$ denotes the intercept and $\beta(\tau)$ denotes the slope which are smoothly varying function of $\tau$. After transforming the predictor and the response variables to unit variable by some monotonic transformation, as shown in \cite{Das2017a}, the linear quantile function can be represented as 
\begin{align}
Q_y(\tau|x)= x\xi_1(\tau)+(1-x)\xi_2(\tau) \; \text{for} \; \tau \in [0,1], \; x,y \in [0,1],
\label{our_eq}
\end{align}
for some functions $\xi_1(\tau)$ and $\xi_2(\tau)$ which are monotonically increasing in $\tau$ for $\tau \in [0,1]$ satisfying $\xi_1(0) = \xi_2(0) = 0$,  $\xi_1(1) = \xi_2(1) = 1$. Equation (\ref{our_eq}) can be re-framed as
\begin{align*}
Q_y(\tau|x)= \beta_0(\tau)+x\beta_1(\tau) \; \text{for} \; \tau \in [0,1], \; x,y \in [0,1],
\end{align*}
where $\beta_0(\tau) = \xi_2(\tau)$ and $\beta_1(\tau) = \xi_1(\tau)-\xi_2(\tau)$ denotes the slope and the intercept of the quantile regression. The conditional density for $Y$ is given by
 \begin{align}
 f_y(y|x) = \bigg(\frac{\partial}{\partial \tau}Q_y(\tau|x)|_{\tau=\tau_x(y)}\bigg)^{-1}=\bigg(\frac{\partial}{\partial \tau}\beta_0(\tau)+x\frac{\partial}{\partial \tau}\beta(\tau)|_{\tau=\tau_x(y)}\bigg)^{-1},
 \label{eq : density}
 \end{align}
 where $\tau_x(y)$ solves the equation 
 \begin{align}
x\xi_1(\tau)+(1-x)\xi_2(\tau) = y.
\label{tau_x_y}
 \end{align}
Therefore for any given dataset $\{(x_i,y_i)\}_{i=1}^n$, the likelihood is given by $\prod_{i=1}^n f_Y(y_i|x_i)$.

Let $0=t_0< t_1< \ldots < t_k=1$ be the equidistant knots on the interval $[0,1]$ such that $t_0 = 0$, $t_k=1$ and $t_i=\frac{1}{k}$ for all $i=0,1,\ldots,k-1$. Suppose $\{B_{j,h}(t)\}_{j=1}^{k+h}$ denote the basis functions of $h^{th}$ degree B-splines on [0,1] on the above mentioned set of equidistant knots. 
Now, the basis expansion of $\xi_1(\cdot)$ and $\xi_2(\cdot)$ are given by
\begin{align}
& \xi_1(\tau)=\sum\limits_{j=1}^{k+h} \theta_j B_{j,h}(\tau) \; \text{ where } \; 0=\theta_1<\theta_2<\cdots<\theta_{k+h}=1,  \nonumber \\
& \xi_2(\tau)=\sum\limits_{j=1}^{k+h} \phi_j B_{j,h}(\tau) \; \text{ where } \; 0=\phi_1<\phi_2<\cdots<\phi_{k+h}=1.
\label{eq:constraint}
\end{align}
Note that estimating $\{\theta_j,\phi_j\}_{j=1}^{k+h}$ is equivalent to estimating $G =\{\gamma_j\}_{j=1}^{k+h-1}, D = \{\delta_j\}_{j=1}^{k+h-1}$ where 
\begin{align*}
\gamma_j,\delta_j \geq 0,\; j=1,\cdots,k+h-1, \; \text{and} \; \sum_{j=1}^{k+h-1}\gamma_j= \sum_{j=1}^{k+h-1}\delta_j=1.
\end{align*}

\newpage
\section*{Appendix C : Application of RMPSS to estimate the membership probability vector of mixture model}
\label{sec_appendixC}
In this section we discuss how RMPSS can be used to estimate the proportion vector of a mixture model (e.g., Gaussian mixture). Suppose $X$ is a random variable which is coming from mixture of $C$ classes with density functions $\{f_j(\cdot|\theta_j)\}_{j=1}^C$ with probabilities $\mathbf{p} = (p_1,\ldots,p_C)$ such that $p_j\geq 0,\sum_{j=1}^Cp_j = 1$. So in this case, the density of the univariate mixture model is given by 
\begin{align}
   L( f(x|\mathbf{p},\boldsymbol{\theta}) = \sum_{j=1}^C p_jf_j(x|\theta_j),
\end{align}
where $\boldsymbol{\theta} = (\theta_1,\ldots, \theta_C)$ denotes the parameters of $C$ classes. For a given sample $\{x_i\}_{i=1}^n$, the likelihood is given by $$L(\mathbf{p},\boldsymbol{\theta}) = \prod_{i=1}^n\sum_{j=1}^C p_jf_j(x_i|\theta_j).$$ We consider the case where $\theta_j$s are univariate.\\
\underline{Case 1 : When $\boldsymbol{\theta}$ is known : }

In case, $\boldsymbol{\theta}$ is known, the likelihood is a function of only the proportion vector $\mathbf{p}$. Now, since $\theta_j$s are known, while estimating $\mathbf{p}$, problem of identifiability would not occur as the order of $\theta_j$s cannot change, so changing the order of elements of $\mathbf{p}$ would not produce the same likelihood value for all $x \sim X$ assuming all $\theta_j$s are different. So the proportion vector can be estimated using RMPSS without further modification.\\
\underline{Case 2 : When $\boldsymbol{\theta}$ is unknown : }

In case, $\boldsymbol{\theta}$ is also unknown, both $\mathbf{p} = (p_1,\ldots,p_C)$ and $\boldsymbol{\theta} = (\theta_1,\ldots,\theta_C)$ are needed to be estimated. However, unlike previous scenario, in this case, the likelihood function is not identifiable. For example, suppose $\mathbf{p}^{*}=(q_1,q_2,q_3), \mathbf{p}^{**}=(q_2,q_1,q_3), \boldsymbol{\theta}^{*} = (\phi_1,\phi_2, \phi_3), \boldsymbol{\theta}^{**} = (\phi_2,\phi_1, \phi_3)$ for $C=3$. Then note that $L(\mathbf{p}^{*},\boldsymbol{\theta}^{*}) = L(\mathbf{p}^{**},\boldsymbol{\theta}^{**})$ holds for any given sample. In order to get rid of the identifiability problem, we set a natural ordering of the $\boldsymbol{\theta}$ parameter space such that $\theta_1 > \theta_2 > \cdots > \theta_C$. Define, $\beta_j = \theta_j - \theta_{j-1}$ for $j=2,\cdots,C$. So, in case $\boldsymbol{\theta}\in \mathrm{R}^C$, then the new set of parameters are given by $\boldsymbol{\theta}^\prime = \{\theta_1,\beta_2,\beta_3,\ldots,\beta_C\} \in \mathrm{R}\times \mathrm{R^+}^{C-1}$. Now, to estimate the solution $\hat{\mathbf{p}}$ and $\hat{\boldsymbol{\theta}^\prime}$ for which the likelihood function is maximized, RMPSS can be used along with any other Black-box algorithm (e.g., GA, SA, PSO etc), such that at any given iteration, RMPSS is used to maximize the likelihood in terms of  $\mathbf{p}$ fixing the value of $\boldsymbol{\theta}^\prime$ at current value, then Genetic Algorithm (or any other Blackbox algorithm) is used to maximize the likelihood in terms of $\boldsymbol{\theta}^\prime$ fixing the value of $\mathbf{p}$ at the current updated value. Suppose $\mathbf{p}^{(k)}$ and $\boldsymbol{\theta^\prime}^{(k)}$ denote the updated values of $\mathbf{p}$ and $\boldsymbol{\theta^\prime}$ at the beginning of $k$-th iteration $(k \geq 2)$. Then following iteration steps should be performed iterartively until final solution is obtained.
\begin{itemize}
    \item While $L_1(\mathbf{p}^{(k)},\boldsymbol{\theta^\prime}^{(k)}) \neq L_1(\mathbf{p}^{(k-1)},\boldsymbol{\theta^\prime}^{(k-1)})$
    \begin{enumerate}
    \item Set $k = k+1$.
        \item Set $\mathbf{p}^{(k)} = \argmax_{\mathbf{p}}, L_1(\mathbf{p}, \boldsymbol{\theta^\prime} = \boldsymbol{\theta^\prime}^{(k-1)}$) by solving with RMPSS where $\mathbf{p} \in \Delta^{C-1}$,
        \item Set $\boldsymbol{\theta^\prime}^{(k)} = \argmax_{\boldsymbol{\theta^\prime}} L_1(\mathbf{p} = \mathbf{p}^{(k)}, \boldsymbol{\theta^\prime})$ by solving with GA (or any other Black-box optimization technique) where $\boldsymbol{\theta^\prime} \in \mathrm{R}\times \mathrm{R^+}^{C-1}$,
    \end{enumerate}
\end{itemize}
where $L_1(\mathbf{p}, \boldsymbol{\theta^\prime}) = L(\mathbf{p}, \boldsymbol{\theta})$ and
\begin{align*}
 \Delta^{C-1} = \{(p_{1}, \ldots, p_{C}) \in \mathbb{R}^{C} \; | \; p_i \geq 0, \;  i=1,\ldots,C, \; \sum_{i=1}^{C}p_i= 1\}.
 \end{align*}

\newpage

 \end{document}